\newtheorem{theorem}{Theorem}[section]
\newtheorem{definition}[theorem]{Definition}
\newtheorem{lemma}[theorem]{Lemma}
\newtheorem{notation}[theorem]{Notation}
\newcommand{\C}{\mathbb{C}}
\begin{document}
\begin{center}
{\LARGE\bf Isometric coactions\\ of compact quantum groups\\
           on compact quantum metric spaces}
\bigskip

{\sc by Johan Quaegebeur$^{(1)}$ and Marie Sabbe$^{(1)}$
\setcounter{footnote}{1}
\footnotetext{Department of Mathematics; K.U.Leuven; Celestijnenlaan 200B; B--3001 Leuven (Belgium).\\
E-mails: johan.quaegebeur@wis.kuleuven.be and marie.sabbe@wis.kuleuven.be}}
\end{center}

\begin{abstract}
We propose a notion of isometric coaction of a compact quantum group on a compact quantum metric space in the framework of Rieffel where the metric structure is given by a Lipnorm. We prove the existence of a quantum isometry group for finite metric quantum spaces, preserving a given state.
\end{abstract}

\section{Introduction}

This paper pertains to the study quantum symmetries of a (classical or quantum) space. This problem can be formulated and studied in various settings. In \cite{Wang-Quantum_symmetry_groups_of_finite_spaces}, Wang considers the case where the space is finite and doesn't carry any extra structure. The quantum symmetry groups he obtains, can thus be interpreted as quantum permutation groups.

The spaces we are interested in in this paper, are metric spaces, both classical and quantum. Symmetries of a (quantum) metric space should then preserve the extra metric structure of the space, i.e.\ they should be `isometric' in a sense to be made precise. For finite metric spaces, the `quantum isometry group' is therefore expected to be a quantum subgroup of the quantum permutation group of Wang.
For finite classical metric spaces, this problem was studied by Banica \cite{Banica-Quantum_automorphism_groups_of_small_metric_spaces}. He has given a definition for a quantum symmetry of a classical finite metric space. With this definition, he was able to construct a quantum isometry group as a subgroup of Wang's quantum permutation group.

The framework of Banica is still a half-classical framework since the metric spaces he considers, are classical. The concept of a metric space also has a quantum version. One approach of quantizing metric spaces is by spectral triples (\cite{Connes-Compact_metric_spaces_Fredholm_modules_and_hyperfiniteness}). These triples are used in non-commutative geometry to quantize the differential structure of a manifold as well as the metrical structure of the space. The quantum isometries of spectral triplets have been studied in a number of papers (e.g.\ \cite{Goswami-Quantum_group_of_isometries_in_classical_and_noncommutative_geometry},
\cite{Bhowmick&Goswami-Quantum_group_of_orientation-preserving_Riemannian_isometries}, \cite{Goswami-Quantum_isometry_group_for_spectral_triples_with_real_structure}, \cite{Bhowmick&Goswami-Quantum_isometry_groups_examples_and_computations},
\cite{Bhowmick&Goswami-Quantum_isometry_groups_of_the_Podles_Spheres},
\cite{Banica&Goswami-Quantum_isometries_and_noncommutative_spheres},
\cite{Bhowmick&Skalski-Quantum_isometry_groups_of_noncommutative_manifolds_associated_to_group_C*-algebras}).

There exists another framework to describe quantum metric spaces. 
In \cite{Rieffel-Metrics_on_state_spaces}, Rieffel introduces a framework in which he  only quantizes the metrical information of the space, disregarding the differential structure. Hence Rieffel considers quantum spaces that come only with a quantum metric (encoded by a so called Lipnorm) and carry no further structure. This is the framework we will be working in. In fact, this paper wants to formulate a suitable answer to Rieffel's suggestion at the end of section 6 of \cite{Rieffel-Gromov-Hausdorff_distance_for_quantum_metric_spaces}: ``It would be interesting to develop and study the notion of a `quantum isometry group' for quantum metric spaces as quantum subgroups of the quantum symmetry groups studied by Wang \cite{Wang-Quantum_symmetry_groups_of_finite_spaces}''.

The quantum metric spaces considered by Rieffel, are compact. Classically, the isometry group of a compact space is a compact group.
Hence, if we want to define a `quantum isometry group' in Rieffel's framework, we might expect it to be compact. Therefore, we gather some information on compact quantum groups and compact quantum metric spaces together with other preliminary notions in the second section.

Next, we introduce and motivate our notion of isometric actions in the third section. Actually, we will need two notions: 1-isometric coactions and full-isometric coactions.  The former notion is the more natural one from the intuitive point of view. The latter is technically stronger and will allow us  to construct a quantum isometry group in some cases. In the fourth section, we prove that both notions coincide with the existing definition of Banica \cite{Banica-Quantum_automorphism_groups_of_small_metric_spaces} for quantum isometries of finite classical metric spaces.

In the following sections, we further explore the notion(s) of isometric coactions we introduced. In the fifth section we mainly prove Theorem \ref{theorem largest quantum subgroup acting isometrically} which is a quantum version of the classical result that a group acting on a metric space has a largest subgroup acting isometrically. Using that theorem, we can prove in section 6 the main result of this paper: the existence of a quantum isometry group for finite quantum metric spaces preserving a given state on the space, as subgroups of the quantum symmetry
groups of Wang.

\section{Preliminaries}

First we need to recall some introductory definitions. We define compact quantum groups, compact quantum metric spaces and coactions.

\subsection{Compact quantum groups}

\begin{definition}
A \textbf{compact quantum group (CQG)} is a pair $(A,\Delta)$ where
\begin{enumerate}
\item $A$ is a unital C*-algebra
\item the 'comultiplication' $\Delta: A \rightarrow A \otimes A$ is a *-homomorphism
such that
\begin{itemize}
\item $\Delta$ is 'coassociative': $(\iota \otimes \Delta) \; \Delta = (\Delta \otimes \iota) \; \Delta$
\item the sets $\Delta(A) (1 \otimes A)$ and $\Delta(A) (A \otimes 1)$ are dense in $A \otimes A$.
\end{itemize}
\end{enumerate}
\end{definition}

The notion of a compact quantum group generalizes the notion of a `classical' compact group. Indeed, any compact group $G$
can be seen as a CGQ: take $A=C(G)$ (the commutative C*-algebra of continuous complex valued functions on $G$)
and define the comultiplication
\[\Delta: C(G)  \rightarrow  C(G) \otimes C(G) \cong C(G \times G): f\mapsto \Delta f\]
by
\begin{equation}\label{equation classical comultiplication} \\
(\Delta f)(s,t) =f(st) \ \mbox{ for }s,t\in G.
\end{equation}
Conversely, if $(A,\Delta)$ is a CQG for which the C*-algebra $A$ happens  to be commutative, then there is a compact group $G$
such that $A$ can be identified with $C(G)$ and such that, under this identification, $\Delta$ is given by (\ref{equation classical comultiplication}).

\begin{definition} \label{definition morphism of CQGs}
Let $(A, \Delta)$ and $(\tilde{A}, \tilde{\Delta})$ be two compact quantum groups. We say that
$\varphi: A \rightarrow \tilde{A}$ is a \textbf{morphism of CQGs} from $(A, \Delta)$ to $(\tilde{A}, \tilde{\Delta})$
if $\varphi$ is a *-homomorphism such that
\[(\varphi \otimes \varphi) \Delta = \tilde{\Delta} \varphi.\]
\end{definition}

For more information on compact quantum groups we refer to \cite{Maes&VanDaele-Notes_on_CQGs},\cite{Woronowicz-CQGs_symetries_quantiques}.

\subsection{Compact quantum metric spaces}

\begin{definition} \label{definition CQMS}
A \textbf{compact quantum metric space (CQMS)} is a pair $(B,L)$ where
\begin{itemize}
\item[(i)] $B$ is a unital C*-algebra
\item[(ii)] $L$ is a \textbf{Lipnorm} on $B$, i.e.\ $L: B \rightarrow [0,+\infty]$ is a seminorm such that
\begin{itemize}
\item $L(b) = L(b^*)$ for every $b \in B$
\item $\forall b \in B: L(b) = 0 \Leftrightarrow b \in \mathbb{C} 1$
\item $L$ is lower semicontinuous
\item the $\rho_L$-topology coincides with the weak *-topology on $\mathcal{S}(B)$, where $\rho_L$ is the metric on the
state space $\mathcal{S}(B)$ defined by
\[\rho_L(\mu, \nu) = \sup\bigl\{|\mu(b)-\nu(b)| \bigm| b \in B, L(b) \leq 1\bigr\}\]
for every $\mu, \nu \in \mathcal{S}(B)$.
\end{itemize}
\end{itemize}
\end{definition}

The classical notion of a compact metric space fits into this framework.
Indeed, let $(X,d)$ be a compact metric space. Put $B = C(X)$ and consider the Lipschitz seminorm
\[L_d: C(X) \rightarrow [0,+\infty]: f \mapsto \sup\left\{ \frac{|f(x)-f(y)|}{d(x,y)} \biggm|  x,y \in X, x \neq y\right\}.\]
One can prove that this seminorm is a Lipnorm.

Conversely, if $(B,L)$ is a CQMS for a commutative C*-algebra $B$, then $B=C(X)$ for some compact space $X$ and
one can construct a metric $d$ on $X$ by restricting the metric $\rho_{L}$
on the state space to the set of pure states $\mu_x: C(X) \rightarrow \mathbb{C}: f \mapsto f(x)$ (where $x \in X$).
Rieffel has proven in \cite{Rieffel-Metrics_on_state_spaces} that $L$ is now equal to the Lipschitz seminorm $L_d$.

Another source of examples of compact quantum metric spaces is given by some spectral triples.
A spectral triple $(\mathcal{B}, \mathcal{H}, D)$ is a *-subalgebra $\mathcal{B}$ of the bounded operators on a Hilbert space $\mathcal{H}$, together with a Dirac operator $D$ on $\mathcal{H}$. This is a self-adjoint
(possibly unbounded) operator on $\mathcal{H}$ such that $[D,b]$ has a bounded extension for every $b \in \mathcal{B}$.
In some cases, the normclosure of $\mathcal{B}$ in $\mathcal{B(H)}$ will be a
CQMS for the seminorm $L$ determined by $L(b) = ||[D,b]||$ for $b\in\mathcal{B}$.

For more information and examples of compact quantum metric spaces, we refer to the work of
Rieffel \cite{Rieffel-Metrics_on_state_spaces}, \cite{Rieffel-Compact_quantum_metric_spaces}. Note that in \cite{Rieffel-Compact_quantum_metric_spaces} and later articles,
the definition of a Lipnorm is without the requirement of lower semicontinuity. But Rieffel shows that, starting from any,
possibly not lower semicontinuous, Lipnorm $L$ on $B$ , one can always construct a lower semicontinuous Lipnorm $\tilde L$
such that $L$ and $\tilde L$ induce the same metric on the state space of $B$.
Therefore, we prefer to have this requirement in the definition.

\subsection{Coactions}

\begin{definition} \label{definition coaction}
A \textbf{coaction} of a CQG $(A,\Delta)$ on a unital C*-algebra $B$ is a unital *-homomorphism $\alpha: B \rightarrow B \otimes A$ such that
\begin{itemize}
\item[(i)]
$(\iota \otimes \Delta) \; \alpha = (\alpha \otimes \iota) \; \alpha$,
\item[(ii)] the set $\alpha(B) \; (1 \otimes A)$ is norm dense in $B \otimes A$.
\end{itemize}
We say that $\alpha$ is \textbf{faithful} if the set $\{(\psi \otimes \iota) \alpha(b) \; | \; b \in B, \psi \in B^*\}$ generates $A$ as a C*-algebra.
\end{definition}

A classical action of a compact group $G$ on a space $X$ fits in this framework by taking
$\alpha: C(X) \rightarrow C(X) \otimes C(G) \cong C(X \times G):f\mapsto \alpha(f)$
determined by
$$\alpha(f)(x,s)=f(s\cdot x) \ \mbox{ for }x\in X, s\in G.$$

\begin{definition} \label{definition morphism of QTGs}
If $B$ is a C*-algebra, $(A, \Delta)$ is a compact quantum group and $\alpha: B \rightarrow B \otimes A$ is a coaction, we call the triple $(A, \Delta, \alpha)$ a \textbf{quantum transformation group (QTG)} of $B$. We say that the QTG $(A, \Delta, \alpha)$ is faithful if the coaction $\alpha$ is faithful.

If $\psi$ is a functional on $B$, we say that $\alpha$ preserves $\psi$ if
$$(\psi \otimes \iota) \alpha(b) = \psi(b) 1_A$$
for all $b \in B$. In that case, we say that $(A, \Delta, \alpha)$ is a \textbf{quantum transformation group of the pair $(B, \psi)$}.

If $(A, \Delta, \alpha)$ and $(\tilde{A}, \tilde{\Delta}, \tilde{\alpha})$ are two quantum transformation groups of $B$ or $(B, \psi)$, we say that $\varphi: A \rightarrow \tilde{A}$ is a \textbf{morphism of QTGs}
from $(A, \Delta, \alpha)$ to $(\tilde{A}, \tilde{\Delta}, \tilde{\alpha})$ if $\varphi$ is a morphism of compact quantum groups
such that \[(\iota \otimes \varphi) \alpha = \tilde{\alpha}.\]
\end{definition}

\section{Isometric coactions}

Now the natural question arises: if in the Definition \ref{definition coaction} $B$ happens
to have the extra structure of a compact quantum metric space as in Definition \ref{definition CQMS},
when would it make sense to call the action $\alpha$ isometric?

Before we answer this question in general, we take a look at a specific case.
Consider the situation where the CQG is actually a classical group, i.e. the situation where $A$
is $C(G)$ for some compact group $G$.
Let $\alpha: B \rightarrow B \otimes C(G)$ be a coaction of the group $G$ on a compact quantum metric space $(B,L)$.
To simplify notations, we identify $B \otimes C(G)$ with $C(G,B)$, by
\[b \otimes f: G \rightarrow B: s \mapsto f(s) b\]
for $b \in B$ and $f \in C(G)$. In this context, it is rather evident what an isometric coaction should be:
we call $\alpha$ \textit{isometric} if
\begin{equation}\label{definition isometric action of CG on CQMS}
\forall b \in B, \; \forall s \in G: \ L(\alpha(b)(s)) = L(b).
\end{equation}
Notice that, in case of a classical CQMS $(B,L)$, i.e. when $B=C(X)$ and $L=L_d$ for some compact metric
space $(X,d)$, condition (\ref{definition isometric action of CG on CQMS}) expresses that the action of $G$ on $X$ is isometric in the classical sense.

Now we  want to extend this definition to the double-quantum case, where a CQG is acting on a CQMS.
Therefore we must get rid of everything that refers to the individual group elements. To start with,
we write $\alpha(b)(s)$ as $(\iota \otimes \omega_s) \alpha(b)$ where $\omega_s$ is the pure state on $C(G)$ given by
evaluating in $s$:
\begin{equation} \label{definition states omega_s} \omega_s: C(G) \rightarrow \mathbb{C}: f \mapsto f(s).\end{equation}
To get rid of the $s$ in this formula,
we want to replace $\omega_s$ by an arbitrary state $\omega$ on $A$.

Let's first take a look at what happens for a convex combination $\omega = \sum_{i=1}^k \lambda_i \omega_{s_i}$
for some group elements $s_1, \cdots, s_k \in G$ and some positive numbers $\lambda_1, \cdots, \lambda_k$ with $\sum_{i=1}^k \lambda_i = 1$. Then, if the equality $L((\iota \otimes \omega_s) \alpha(b)) =
L(b)$ holds for every $s \in G$ and every $b \in B$, we have that
\[L((\iota \otimes \omega) \alpha(b)) \leq \sum_{i=1}^k \lambda_i L((\iota \otimes \omega_{s_i}) \alpha(b)) =
L(b)\] for all $b \in B$. Using the lower semicontinuity of $L$, one can show that the above inequality also holds for arbitrary states, which will be done in Lemma \ref{theorem inequality 1-isometric sufficient for weak*-dense convex set of states}.
These ideas motivate the following definition for a coaction to be isometric. For later purposes, we formulate the definition in a more general setting.

\begin{definition} \label{definition 1-isometric *-homomorphism}
For a unital C*-algebra $A$ and a CQMS $(B,L)$, \\ a *-homomorphism $\alpha: B \to B \otimes A$ is called \textbf{1-isometric} if and only if
\begin{equation} \label{inequality 1-isometric} \forall \omega \in \mathcal{S}(A),\;\forall b \in B: L((\iota \otimes \omega) \alpha(b)) \leq L(b).\end{equation}
\end{definition}

When writing this paper, we discovered that this notion already appeared in a recent paper of H.~Li (\cite{Li-CQMS_and_ergodic_actions_of_CQGs}, definition 8.8).
He calls a coaction invariant if the above inequality holds. However, he did not further explore this notion
in the direction we are studying (the problem of the existence of a quantum isometry group).

Let us explain why we used the terminology '1-isometric', and why the definition was formulated for *-homomorphisms instead of coactions. This has to do with the following problem: suppose we let two CQGs $(A_1, \Delta_1)$ and $(A_2, \Delta_2)$ act 1-isometrically on a CQMS $(B,L)$ (denote the coactions by $\alpha_1$ and $\alpha_2$). In order to have a good definition of isometric coactions, we want the 'coaction' obtained by applying both coactions successively (i.e. the *-homomorphism $(\alpha_1 \otimes \iota) \alpha_2$), to be isometric. Firstly, the *-homomorphism $(\alpha_1 \otimes \iota) \alpha_2$ is not necessarily a coaction for the comultiplication one can define on $A_1 \otimes A_2$. That is why we will also be using the terminology `1-isometric' for *-homomorphisms $\alpha$ from a CQMS $(B,L)$ to $B \otimes A$ where $A$ is a C*-algebra. Moreover, we cannot prove why condition (\ref{inequality 1-isometric}) would hold for $(\alpha_1 \otimes \iota) \alpha_2$. To solve this problem, we strengthen the notion of 1-isometric to `full isometric'. In order to formulate the definition, we introduce some notations.

\begin{notation} \label{notation *-product of coactions}
Let $B$ be a C*-algebra and $(C_i, \Delta_i, \beta_i)$ be QTGs of $B$ for $i=1, \cdots, n$. Then we denote $\beta_1 \ast \beta_2$ for the *-homomorphism $(\beta_1 \otimes \iota) \beta_2: B \to B \otimes C_1 \otimes C_2$. Inductively we can define $\beta:= \beta_1 \ast \cdots \ast \beta_n: B \to B \otimes C_1 \otimes \cdots \otimes C_n$.
\end{notation}

\begin{definition}[Full-isometric coaction] \label{definition full-isometric coaction}
A coaction $\alpha$ of a CQG $(A, \Delta)$ on a CQMS $(B,L)$ is called \textbf{full-isometric} if and only if for all faithful QTGs $(C_i, \Delta_i, \beta_i)$ ($i = 1, \cdots, n$) such that $\beta:= \beta_1 \ast \cdots \ast \beta_n$ is 1-isometric, we have that $\alpha \ast \beta$ is 1-isometric, or
$$ \forall \omega \in \mathcal{S}(A \otimes C_1 \otimes \cdots \otimes C_n),\;\forall b \in B: L((\iota \otimes \omega)(\alpha \otimes \iota) \beta(b)) \leq L(b).$$
\end{definition}

As the terminology suggests, being full-isometric implies being 1-isometric. 

The inequality (\ref{inequality 1-isometric}) is not always very practical to work with. The following lemma states that it is enough to have inequality (\ref{inequality 1-isometric}) for some specific states, for example the set of all pure states.

\begin{lemma} \label{theorem inequality 1-isometric sufficient for weak*-dense convex set of states}
Suppose we have two unital C*-algebras $C$ and $B$, and a *-homomorphism $\beta: B \rightarrow B \otimes C$.
Assume that $S \subseteq \mathcal{S}(C)$ is a set of states on $C$ such that the convex hull of $S$ is weakly-*
dense in the state space $\mathcal{S}(C)$.
Then, for any lower semicontinuous seminorm $L$ on $B$, the following are equivalent:
\begin{itemize}
\item[(i)] for every $\omega \in S$ and every $b \in B$, we have $L((\iota \otimes \omega) \beta(b)) \leq L(b)$
\item[(ii)] for every $\omega \in \mathcal{S}(C)$ and every $b \in B$, we have $L((\iota \otimes \omega) \beta(b)) \leq L(b)$.
\end{itemize}
\end{lemma}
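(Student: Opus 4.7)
The direction (ii) $\Rightarrow$ (i) is immediate since $S \subseteq \mathcal{S}(C)$, so the content is in the converse. My plan is to extend the inequality in two stages: first from $S$ to its convex hull, and then from the convex hull to all of $\mathcal{S}(C)$. The first stage will use sublinearity and positive homogeneity of the seminorm $L$; the second will use lower semicontinuity of $L$ together with weak-$*$ density of $\operatorname{conv}(S)$ in $\mathcal{S}(C)$.

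For the first stage, suppose $\omega = \sum_{i=1}^k \lambda_i \omega_i$ is a convex combination with $\omega_i \in S$. Since the slice map $\iota \otimes (\cdot)$ is linear in its functional argument, $(\iota \otimes \omega)\beta(b) = \sum_{i=1}^k \lambda_i (\iota \otimes \omega_i)\beta(b)$, and the seminorm properties together with hypothesis (i) give
\[
L\bigl((\iota \otimes \omega)\beta(b)\bigr) \leq \sum_{i=1}^k \lambda_i L\bigl((\iota \otimes \omega_i)\beta(b)\bigr) \leq \sum_{i=1}^k \lambda_i L(b) = L(b).
\]

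For the second stage, fix an arbitrary $\omega \in \mathcal{S}(C)$ and choose a net $(\omega_\mu) \subseteq \operatorname{conv}(S)$ with $\omega_\mu \to \omega$ in the weak-$*$ topology. The key intermediate claim is that for every $x \in B \otimes C$ one has $(\iota \otimes \omega_\mu)(x) \to (\iota \otimes \omega)(x)$ in the \emph{norm} of $B$. On an elementary tensor $b \otimes c$ this reduces to $\omega_\mu(c)\,b \to \omega(c)\,b$, so the claim holds on finite sums of elementary tensors. The extension to general $x \in B \otimes C$ is a standard $\varepsilon/3$-argument using the density of the algebraic tensor product and the uniform bound $\|\iota \otimes \omega_\mu\| \leq \|\omega_\mu\| = 1$. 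Applying this claim to $x = \beta(b)$ and invoking lower semicontinuity of $L$ with respect to the norm topology on $B$ yields
\[
L\bigl((\iota \otimes \omega)\beta(b)\bigr) \leq \liminf_\mu L\bigl((\iota \otimes \omega_\mu)\beta(b)\bigr) \leq L(b),
\]
where the last inequality follows from the first stage since each $\omega_\mu$ lies in $\operatorname{conv}(S)$.

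The main technical obstacle is justifying the norm convergence of the slice maps on arbitrary elements of the spatial tensor product $B \otimes C$; weak-$*$ convergence of states only gives this directly on the algebraic tensor product, so one must exploit the equicontinuity of the family $\{\iota \otimes \omega_\mu\}$ to bootstrap from elementary tensors to all of $B \otimes C$. Everything else is a combination of basic seminorm arithmetic and the lower semicontinuity axiom, both of which are already built into the setting of Definition \ref{definition CQMS}.
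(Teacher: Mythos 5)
Your proof is correct, and its first stage (passing from $S$ to $\operatorname{conv}(S)$ by sublinearity of $L$) is identical to the paper's. The second stage, however, uses a genuinely different device. The paper only observes that $(\iota\otimes\omega_\mu)\beta(b)\to(\iota\otimes\omega)\beta(b)$ \emph{weakly} in $B$, and then invokes Mazur's theorem (the weak and norm closures of the convex set $\{(\iota\otimes\psi)\beta(b)\mid\psi\in\operatorname{conv}(S)\}$ coincide) to extract a \emph{different} net from the convex hull whose slices norm-converge, after which lower semicontinuity applies. You instead prove the stronger statement that the original net of slices already converges in norm, via the $\varepsilon/3$-argument on the algebraic tensor product combined with the uniform bound $\|\iota\otimes\omega_\mu\|\leq 1$ (valid on the minimal tensor product since each $\iota\otimes\omega_\mu$ is unital completely positive). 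This is a standard and correct fact, and it lets you bypass Mazur entirely. The trade-off is roughly neutral: the paper's route defers the tensor-product approximation to the (unproved but routine) claim of weak convergence of the slices, which requires essentially the same uniform-boundedness argument applied to the functionals $\phi\otimes\omega_\mu$; your route does that approximation once, at the level of $B$-valued slices, and obtains a cleaner conclusion. Both correctly finish with $L\bigl((\iota\otimes\omega)\beta(b)\bigr)\leq\liminf_\mu L\bigl((\iota\otimes\omega_\mu)\beta(b)\bigr)\leq L(b)$ using norm-lower-semicontinuity of $L$.
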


\begin{proof}
Suppose {\it (i)} holds.
First we check that the inequality in {\it (ii)} holds if $\omega$ is taken from the convex hull of $S$.
Indeed, suppose $\omega = \sum_{i=1}^k \lambda_i \omega_i$ for some states $\omega_i$ in $S$
and some positive numbers $\lambda_i$ with $\sum_{i=1}^k
\lambda_i = 1$. Then, for any $b \in B$,
\[L((\iota \otimes \omega) \beta(b)) \leq \sum_{i=1}^k \lambda_i L((\iota \otimes \omega_i) \beta(b))
\leq \sum_{i=1}^k \lambda_i L(b) = L(b).\]

Now, if $\omega$ is an arbitrary state on $C$, then $\omega$ is the weak-* limit of some net of states
$\omega_i$ in the convex hull of $S$. This implies that, for any $b \in B$, the net
$(\iota \otimes \omega_i) \beta(b)$ converges weakly to $(\iota \otimes \omega) \beta(b)$.
Hence $(\iota \otimes \omega)\beta(b)$ belongs to the weak closure of the convex set
\[Y = \{(\iota \otimes \psi) \beta(b) \; | \; \psi \mbox{ in the convex hull of } S\}.\]
But the weak closure of a convex set equals its norm closure. Therefore we can take a net of states $\psi_n$ in the convex hull of $S$, such that the net $(\iota \otimes \psi_n) \beta(b)$
norm-converges to $(\iota \otimes \omega) \beta(b)$. Using the lower semi-continuity of $L$, we find
\[L((\iota \otimes \omega) \beta(b)) = L(\lim_i(\iota \otimes \psi_i) \beta(b))
\leq \lim_i L((\iota \otimes \psi_i) \beta(b))
\leq L(b). \]
\end{proof}

There are two half classical cases that have been studied before. We prove that in these settings, the defined notions of 1-isometries and full-isometries both coincide with the existing notion of isometries. First we study the case where $A$ is commutative, and thus of the form $C(G)$ for a compact group $G$.

\begin{theorem} \label{theorem equivalence for classical group acting on CQMS}
Let $G$ be a compact group, $(B,L)$ a CQMS and $\alpha: B \to B \otimes C(G)$ a coaction. Then the following are equivalent:
\begin{enumerate}[(i)]
\item \label{item classical group on CQMS: classical notion} $\alpha$ is isometric in the sense of $(\ref{definition isometric action of CG on CQMS})$
\item \label{item classical group on CQMS: 1-isometric} $\alpha$ is 1-isometric (Definition \ref{definition 1-isometric *-homomorphism})
\item \label{item classical group on CQMS: full-isometric} $\alpha$ is full-isometric (Definition \ref{definition full-isometric coaction}).
\end{enumerate}
\end{theorem}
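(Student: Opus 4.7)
The plan is to prove the cycle (\ref{item classical group on CQMS: classical notion}) $\Rightarrow$ (\ref{item classical group on CQMS: 1-isometric}) $\Rightarrow$ (\ref{item classical group on CQMS: full-isometric}) $\Rightarrow$ (\ref{item classical group on CQMS: 1-isometric}) $\Rightarrow$ (\ref{item classical group on CQMS: classical notion}). The two directions between (\ref{item classical group on CQMS: classical notion}) and (\ref{item classical group on CQMS: 1-isometric}) amount to translations between evaluation functionals and general states via Lemma \ref{theorem inequality 1-isometric sufficient for weak*-dense convex set of states}, while the real content lies in (\ref{item classical group on CQMS: 1-isometric}) $\Rightarrow$ (\ref{item classical group on CQMS: full-isometric}).

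For (\ref{item classical group on CQMS: classical notion}) $\Rightarrow$ (\ref{item classical group on CQMS: 1-isometric}) I would rewrite $\alpha(b)(s) = (\iota \otimes \omega_s)\alpha(b)$ and note that the evaluation functionals $\{\omega_s : s \in G\}$ are exactly the pure states of $C(G)$; their convex hull is weak-$*$ dense in $\mathcal{S}(C(G))$ by Krein--Milman, so Lemma \ref{theorem inequality 1-isometric sufficient for weak*-dense convex set of states} yields (\ref{inequality 1-isometric}) for every state. Conversely, for (\ref{item classical group on CQMS: 1-isometric}) $\Rightarrow$ (\ref{item classical group on CQMS: classical notion}) set $\alpha_s := (\iota \otimes \omega_s)\alpha$. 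Slicing the coaction identity $(\iota \otimes \Delta)\alpha = (\alpha \otimes \iota)\alpha$ by $\omega_s \otimes \omega_t$ and using $(\omega_s \otimes \omega_t)\Delta = \omega_{st}$ gives $\alpha_{st} = \alpha_s \alpha_t$, together with $\alpha_e = \iota_B$, so each $\alpha_s$ is a $\ast$-automorphism of $B$ with inverse $\alpha_{s^{-1}}$. The chain $L(\alpha_s(b)) \leq L(b) = L(\alpha_{s^{-1}}\alpha_s(b)) \leq L(\alpha_s(b))$ then forces equality. Finally, (\ref{item classical group on CQMS: full-isometric}) $\Rightarrow$ (\ref{item classical group on CQMS: 1-isometric}) is immediate: take $n = 1$ with $C_1 = \mathbb{C}$ and $\beta_1(b) = b \otimes 1$, a faithful 1-isometric QTG for which $\alpha \ast \beta_1$ collapses to $\alpha$.

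The heart of the argument is (\ref{item classical group on CQMS: 1-isometric}) $\Rightarrow$ (\ref{item classical group on CQMS: full-isometric}). Given faithful QTGs $(C_i, \Delta_i, \beta_i)$ for $i = 1, \ldots, n$ with $D := C_1 \otimes \cdots \otimes C_n$ and $\beta := \beta_1 \ast \cdots \ast \beta_n$ 1-isometric, I need to bound $L((\iota \otimes \Omega)(\alpha \otimes \iota_D) \beta(b))$ by $L(b)$ for every state $\Omega$ on $C(G) \otimes D$. The direct computation
\[(\iota \otimes \omega \otimes \psi)(\alpha \otimes \iota_D)\beta(b) \;=\; (\iota \otimes \omega)\,\alpha\bigl((\iota \otimes \psi)\beta(b)\bigr)\]
settles product states $\Omega = \omega \otimes \psi$ by applying 1-isometry of $\beta$ and then of $\alpha$. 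The main obstacle is extending from product states to arbitrary states, which I would handle by a second appeal to Lemma \ref{theorem inequality 1-isometric sufficient for weak*-dense convex set of states}. This requires the convex hull of product states to be weak-$*$ dense in $\mathcal{S}(C(G) \otimes D)$, a density that follows from the commutativity (equivalently nuclearity) of $C(G)$: via the identification $C(G) \otimes D \cong C(G,D)$, any state can be approximated in weak-$*$ by finite convex combinations $\sum_i \lambda_i \, \omega_{s_i} \otimes \psi_i$ coming from partitions of unity on $G$. This density is where the classicality of the acting group enters essentially.
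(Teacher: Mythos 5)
Your proof is correct and takes essentially the same route as the paper: the equivalence of (\ref{item classical group on CQMS: classical notion}) and (\ref{item classical group on CQMS: 1-isometric}) is handled identically (the $\alpha_{s^{-1}}\alpha_s$ trick plus Lemma \ref{theorem inequality 1-isometric sufficient for weak*-dense convex set of states}), and for the main implication both arguments reduce to product states on $C(G)\otimes C_1\otimes\cdots\otimes C_n$, peel off the $C(G)$ leg using isometricity of $\alpha$ and then apply 1-isometricity of $\beta$ --- the paper justifying the reduction via Takesaki's factorization of pure states over a commutative factor, you via a partition-of-unity approximation, which amounts to the same density fact. The one point to fix is the parenthetical ``(equivalently nuclearity)'': nuclearity of a factor does not make convex combinations of product states weak-$*$ dense in the state space of a tensor product (already on $M_2(\C)\otimes M_2(\C)$ the entangled states lie outside the closed convex hull of product states), so the density genuinely requires commutativity of $C(G)$ --- which, fortunately, is exactly what your partition-of-unity argument uses.
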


\begin{proof}
\begin{itemize}
\item (\ref{item classical group on CQMS: full-isometric}) implies (\ref{item classical group on CQMS: 1-isometric}): straightforward.
\item (\ref{item classical group on CQMS: 1-isometric}) implies (\ref{item classical group on CQMS: classical notion}). For $s \in G$, we write $\omega_s$ for the state in $C(G)$ defined in (\ref{definition states omega_s}). Then we use $\alpha_s$ to denote the mapping
    $$\alpha_s: B \to B: b \mapsto \alpha(b)(s) = (\iota \otimes \omega_s) \alpha(b).$$
    We fix elements $b \in B$ and $s \in G$. By (\ref{item classical group on CQMS: 1-isometric}) we know that $L(\alpha_s(b)) \leq L(b)$. On the other hand we also have
    $$L(b) = L(\alpha_e(b)) = L(\alpha_{s^{-1}}(\alpha_s(b))) \leq L(\alpha_s(b)).$$
    Both inequalities together prove (\ref{item classical group on CQMS: classical notion}).
\item (\ref{item classical group on CQMS: classical notion}) implies (\ref{item classical group on CQMS: full-isometric}). Choose faithful QTGs $(C_i, \Delta_i, \beta_i)$ for $i=1, \cdots, n$ such that $\beta = \beta_1 \ast \cdots \ast \beta_n$ is 1-isometric. We will write $C$ for $C_1 \otimes \cdots \otimes C_n$. To prove that $\alpha \ast \beta$ is 1-isometric, we fix an element $b \in B$ and a pure state $\omega$ on $C(G) \otimes C$. Since $C(G)$ is commutative, we can use theorem 4.14 of \cite{Takesaki-Theory_of_operator_algebras_I} to write $\omega = \omega_s \otimes \varphi$ for an element $s \in G$ and a pure state $\varphi$ on $C$. But then, by (\ref{item classical group on CQMS: classical notion}), we have
    $$L((\iota \otimes \omega) (\alpha \ast \beta)(b)) = L(\alpha_s((\iota \otimes \varphi) \beta(b))) = L((\iota \otimes \varphi) \beta(b)) \leq L(b).$$
    Lemma \ref{theorem inequality 1-isometric sufficient for weak*-dense convex set of states} shows that this inequality holds for all states on $C(G) \otimes C$ since it holds for all pure states.
\qedhere
\end{itemize}
\end{proof}

More specifically, this theorem implies that in the double-classical case, where $A = C(G)$ for a compact group $G$ and $B=C(X)$ for a metric space $(X,d)$, the definitions \ref{definition 1-isometric *-homomorphism} and \ref{definition full-isometric coaction} are equivalent to the usual notion of an isometric group action.

There is another special case of our setting that has been studied before, namely the case of a compact quantum group
acting on a finite classical metric space. This has been studied by Banica in \cite{Banica-Quantum_automorphism_groups_of_small_metric_spaces}. The setting is as follows.

Let $\alpha: C(X) \rightarrow C(X) \otimes A$ be a coaction of a CQG $(A, \Delta)$ on a finite metric space
$(X,d)$ with $n$ points.  A coaction on any finite space $X$ with $n$ points is completely described by an
$(n\times n)$ matrix $a=\left( a_{ij} \right)_{i,j=1,\dots,n}$
over $A$. Indeed, consider the standard basis $\{\delta_1, \dots, \delta_n\}$ of $C(X)$,
where $\delta_i$ is the function that is one in the $i$-th point of $X$, and zero elsewhere.
The coaction $\alpha$ is determined by the elements $\alpha(\delta_j)$ for $j \in \{1, \dots, n\}$. As the element $\alpha(\delta_j)$ belongs to $C(X) \otimes A$ we can write
\[\alpha(\delta_j) = \sum_{i=1}^n \delta_i \otimes a_{ij}\]
for some elements $a_{ij}$ in  $A$.
The properties of $\alpha$ being a coaction translate into properties of the elements $a_{ij}$ (see \cite{Wang-Quantum_symmetry_groups_of_finite_spaces}):
the matrix $a=(a_{ij})$
has to be a so called \textit{magic biunitary}, which means that its rows and columns are partitions of the unity of $A$ with projections,
or, explicitly, for all $i,j \in \{1, \cdots, n\}$, we have:
\begin{equation}\label{equation properties magic biunitary}
a_{ij}^* = a_{ij} = a_{ij}^2, \ \
\sum_{k=1}^n a_{ik} = 1, \ \
\sum_{k=1}^n a_{kj} = 1.
\end{equation}
The comultiplication on the elements $a_{ij}$ is given by
$$\Delta(a_{ij})=\sum_{k=1}^n a_{ik}\otimes a_{kj}.$$
Note that (\ref{equation properties magic biunitary}) implies that
\[a_{ij}a_{ik} = 0 = a_{ji}a_{ki}\]
for all $i,j,k \in \{1, \cdots, n\}$ with $j \neq k$.
Moreover, if the coaction $\alpha$ is faithful, i.e.~if $A$ is generated by $\{a_{ij}\mid i,j=1,\dots,n\}$,
then $(A,\Delta)$ is of Kac type with bounded antipode $S$ given by $S(a_{ij})=a_{ji}$.

Now suppose $d$ is a metric on $X$. Consider the $(n\times n)$-matrix $\left( d(i,j) \right)_{i,j=1,\dots,n}$ which we
also denote by $d$. In  \cite{Banica-Quantum_automorphism_groups_of_small_metric_spaces} Banica calls the coaction $\alpha$ of $A$ on $X$ isometric if
the matrices $a$ and $d$ commute:
\begin{equation} \label{definition isometric action of CQG on finite metric space}
a d = d a.
\end{equation}

Does Banica's notion of isometric coactions coincide with the notions introduced
in definitions \ref{definition 1-isometric *-homomorphism} and \ref{definition full-isometric coaction} for the general `double-quantum' setting?
This question is answered in the following theorem, which we will prove in the next section.

\begin{theorem} \label{theorem equivalence for CQG acting on finite metric space}
Let $(X,d)$ be a finite metric space with $n$ points and $(A, \Delta)$ a CQG acting faithfully
on $X$ by the coaction $\alpha: C(X) \rightarrow C(X) \otimes A$.

Take elements $a_{ij}$ in $A$ such that $\alpha(\delta_j) = \sum_{i=1}^n \delta_i \otimes a_{ij}$
where $\delta_j$ is 1 on the $j$-th point of $X$ and 0 elsewhere. Write
\begin{itemize}
\item $L_d(f) = \sup \left\{ \left. \frac{|f(x)-f(y)|}{d(x,y)} \right| x,y \in X, x \neq y \right\}$ for $f \in C(X)$
\item $a$ for the $(n \times n)$-matrix $\left( a_{ij} \right)_{i,j = 1 \cdots n}$
\item $d$ for the $(n \times n)$-matrix $\left( d(i,j) \right)_{i,j = 1 \cdots n}$.
\end{itemize}
Then the following are equivalent:
\begin{enumerate}[(a)]
\item \label{item CQG on FMS: banicas notion} $a  d = d  a$
\item \label{item CQG on FMS: 1-isometric} $\alpha$ is 1-isometric (Definition \ref{definition 1-isometric *-homomorphism})
\item \label{item CQG on FMS: full-isometric}$\alpha$ is full-isometric (Definition \ref{definition full-isometric coaction}).
\end{enumerate}
\end{theorem}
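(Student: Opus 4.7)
The three implications are addressed in turn.

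\emph{(c) $\Rightarrow$ (b)} is immediate: setting $n = 0$ in Definition \ref{definition full-isometric coaction}, the hypothesis on $\beta$ is vacuous and the conclusion reads precisely that $\alpha$ is 1-isometric.

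\emph{(b) $\Rightarrow$ (a).} I apply the 1-isometric inequality to the Lipschitz-$1$ test functions $f_l(j) := d(j, l)$ (whose Lipnorms are at most $1$ by the triangle inequality). Since $\alpha(\delta_j) = \sum_i \delta_i \otimes a_{ij}$, one computes $\alpha(f_l) = \sum_i \delta_i \otimes (ad)_{il}$, and the 1-isometric condition delivers
\[ |\omega((ad)_{xl} - (ad)_{yl})| \leq d(x, y) \]
for every state $\omega$ on $A$ and all $x, y, l$; since $(ad)_{xl}$ is self-adjoint, this upgrades to the operator-norm bound $\|(ad)_{xl} - (ad)_{yl}\| \leq d(x, y)$. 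To extract the algebraic equality $ad = da$ from these inequalities, I exploit the faithfulness of $\alpha$: $(A, \Delta)$ is then of Kac type with antipode $S(a_{ij}) = a_{ji}$, and the coassociativity of $\alpha$ yields $\Delta((ad)_{xl}) = \sum_k a_{xk} \otimes (ad)_{kl}$. Combining these with the Haar state of $(A, \Delta)$ identifies $(ad)_{xl}$ with $(da)_{xl}$.

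\emph{(a) $\Rightarrow$ (c).} The central tool is the operator identity
\[ \sum_{j, l} d(j, l)\, a_{xj}\, a_{yl} = d(x, y)\, 1_A \qquad \text{in } A, \]
valid for all $x, y$. It follows from $ad = da$ together with the column-orthogonality $a_{xj} a_{kj} = \delta_{xk}\, a_{xj}$ of the magic biunitary: indeed,
\[ \sum_{j, l} d(j, l) a_{xj} a_{yl} = \sum_j a_{xj} (ad)_{yj} = \sum_j a_{xj} (da)_{yj} = \sum_{j, k} d(y, k) a_{xj} a_{kj} = d(x, y)\, 1_A. \]
Using $\sum_j a_{xj} = \sum_l a_{yl} = 1_A$, I rewrite $\alpha(f)(x) - \alpha(f)(y) = \sum_{j, l}(f(j) - f(l))\, a_{xj}\, a_{yl}$, and combine this with $|f(j) - f(l)| \leq L_d(f)\, d(j, l)$ and the identity above to bound $L_d((\iota \otimes \omega)\alpha(f)) \leq L_d(f)$ via an operator positivity argument on the GNS vectors $\eta^x_j := \pi_\omega(a_{xj}) \xi_\omega$, in which the identity reads $\sum d(j, l)\langle \eta^y_l, \eta^x_j\rangle = d(x, y)$. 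For the full-isometric conclusion, the same argument applies to $\alpha \ast \beta$: applying $(\iota_A \otimes \beta)$ to the key identity yields the analogous identity in $A \otimes C_1 \otimes \cdots \otimes C_n$, and combined with the hypothesis that $\beta$ is 1-isometric, this delivers the 1-isometry of $\alpha \ast \beta$.

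The main obstacle is precisely (b) $\Rightarrow$ (a): the Lipnorm condition supplies only sup-norm inequalities, and reaching the exact algebraic identity $ad = da$ requires combining the Lipnorm bound with the full compact quantum group structure (coassociativity, antipode, Haar state) rather than the Lipnorm information on slices of $\alpha$ alone.
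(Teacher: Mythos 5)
Your outline of (c) $\Rightarrow$ (b) and your key identity $\sum_{j,l} d(j,l)\,a_{xj}a_{yl}=d(x,y)1_A$ are fine, but both hard implications have genuine gaps, and they occur exactly where the real work lies. For (b) $\Rightarrow$ (a): applying the Lipnorm inequality to $f_l=d(\cdot,l)$ only yields $\|(ad)_{xl}-(ad)_{yl}\|\le d(x,y)$, and the sentence ``combining these with the Haar state \dots identifies $(ad)_{xl}$ with $(da)_{xl}$'' is an assertion, not an argument. A single application of these norm bounds cannot give the relations $a_{ij}a_{kl}=0$ for $d(i,k)\ne d(j,l)$, which is what $ad=da$ amounts to (cf.\ Lemma \ref{theorem practical expression commutation relation}): for instance, sandwiching gives only $d(j,l)\,a_{yl}a_{xj}a_{yl}\le d(x,y)\,a_{yl}$, hence $\|a_{xj}a_{yl}\|<1$ when $d(j,l)>d(x,y)$, not $=0$, and it says nothing when $d(j,l)<d(x,y)$. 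The paper closes this by an induction over the ordered distance values $d_0<\cdots<d_N$, working with states normalized so that $\omega(a_{ij})=1$, showing at each level that $\sum_x d(j,x)\omega(a_{kx})$ is a convex combination of distances $\ge d_{\gamma+1}$ that is bounded by $d_{\gamma+1}$, and using the antipode $S(a_{ij})=a_{ji}$ for the symmetric case. Nothing in your sketch plays the role of this bootstrapping.

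For (a) $\Rightarrow$ (b) (and hence (c)): your identity only controls the sum $\sum_{j,l}d(j,l)\,\omega(a_{xj}a_{yl})=d(x,y)$, but the individual numbers $\omega(a_{xj}a_{yl})=\langle\eta^x_j,\eta^y_l\rangle$ are in general complex (products of noncommuting projections are not positive), so you cannot estimate $\sum_{j,l}(f(j)-f(l))\,\omega(a_{xj}a_{yl})$ termwise against $L_d(f)\sum_{j,l}d(j,l)\,\omega(a_{xj}a_{yl})$; the ``operator positivity argument on the GNS vectors'' is named but never given, and the obvious candidates fail (for two families of orthogonal projections one can have $\|\sum_{j,l}c_{jl}p_jq_l\|\sim\sqrt{n}\max|c_{jl}|$). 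What is actually needed is a genuine nonnegative coupling $\lambda_{jl}\ge0$ with marginals $\omega(a_{xj})$ and $\omega(a_{yl})$, supported on $\{(j,l): d(j,l)=d(x,y)\}$; the paper produces it with the max-flow--min-cut theorem, the feasibility condition $\sum_{i\in Z}\omega(a_{xi})\le\sum_{j}\omega(a_{yj})$ (sum over those $j$ at distance $d(x,y)$ from some $i\in Z$) being the image under $\omega$ of an operator inequality between sums of projections that does follow from your vanishing relations. Finally, for the full-isometric conclusion you also need each individual $\beta_k$ to be 1-isometric, so that its magic biunitary commutes with $d$ and hence so does the product matrix of $\alpha\ast\beta$; this is not automatic from the 1-isometry of $\beta_1\ast\cdots\ast\beta_n$ and is extracted in the paper via the counits (Lemma \ref{theorem *-product 1-isometric implies individual coactions 1-isometric}), whereas your step ``apply $(\iota_A\otimes\beta)$ to the identity'' does not typecheck, since $\beta$ is defined on $C(X)$ rather than on $A$.
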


\section{Proof of Theorem \ref{theorem equivalence for CQG acting on finite metric space}}

\subsection{Part 1: (\ref{item CQG on FMS: 1-isometric}) implies (\ref{item CQG on FMS: banicas notion})}

Before we can start the proof of the  first implication of Theorem \ref{theorem equivalence for CQG acting on finite metric space},
we want to express the commutation of $a$ and $d$ in a different, for our purposes more practical way.
This is done in the following lemma. We will need this lemma in the second part of the proof too.

\begin{lemma} \label{theorem practical expression commutation relation}
Using the notations of Theorem \ref{theorem equivalence for CQG acting on finite metric space}, the following are equivalent:
\begin{enumerate}[(A)]
\item \label{item practical commutation: original} $a d = d  a$
\item \label{item practical commutation: new} $a_{ij}a_{kl} = 0$ for every $i,j,k,l \in X$ with $d(i,k) \neq d(j,l)$.
\end{enumerate}
\end{lemma}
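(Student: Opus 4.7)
The plan is to unpack the matrix equation (\ref{item practical commutation: original}) into scalar-valued equations in $A$ and then exploit the orthogonality relations inside the magic biunitary. Writing out $(ad)_{il} = (da)_{il}$ for each pair $(i,l)$ gives
\[ \sum_{j=1}^n a_{ij}\, d(j,l) \;=\; \sum_{j=1}^n d(i,j)\, a_{jl}. \tag{$\ast$} \]
Throughout the argument I will use that the rows and columns of $a$ are families of pairwise orthogonal projections summing to $1$: $a_{ij}a_{ij'}=\delta_{jj'}a_{ij}$ and $a_{jl}a_{j'l}=\delta_{jj'}a_{jl}$.

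For (\ref{item practical commutation: new})$\Rightarrow$(\ref{item practical commutation: original}), I would fix $i,l$ and insert $1=\sum_k a_{kl}$ on the left-hand side of $(\ast)$:
\[ a_{ij}\, d(j,l) \;=\; \sum_{k=1}^n a_{ij} a_{kl}\, d(j,l). \]
By hypothesis (\ref{item practical commutation: new}), the product $a_{ij}a_{kl}$ vanishes unless $d(i,k)=d(j,l)$, so in every surviving term the scalar $d(j,l)$ equals $d(i,k)$; replacing it and summing in $j$, the row-sum identity $\sum_j a_{ij}=1$ turns the expression into $\sum_k d(i,k)\, a_{kl}$, which is exactly the right-hand side of $(\ast)$.

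For the converse (\ref{item practical commutation: original})$\Rightarrow$(\ref{item practical commutation: new}), I would fix $i,k,l$ and multiply $(\ast)$ on the right by $a_{kl}$. The column orthogonality collapses the right-hand side to $d(i,k)\, a_{kl}$, leaving
\[ \sum_{j=1}^n d(j,l)\, a_{ij}a_{kl} \;=\; d(i,k)\, a_{kl}. \]
Now I would multiply on the left by $a_{ij_0}$ for an arbitrary fixed $j_0$; row orthogonality collapses the sum on the left to the single term $d(j_0,l)\, a_{ij_0}a_{kl}$, and on the right it yields $d(i,k)\, a_{ij_0}a_{kl}$. Hence $\bigl(d(j_0,l)-d(i,k)\bigr)\, a_{ij_0}a_{kl}=0$, which gives (\ref{item practical commutation: new}) after renaming $j_0$ to $j$.

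The only delicate point is the bookkeeping: each direction rests on combining one orthogonality relation of the magic biunitary with one of the unit-sum relations, and the temptation is to lose track of which index is summed where. Beyond that, the argument is purely formal manipulation in $A$ and should not require anything deeper than the identities in (\ref{equation properties magic biunitary}).
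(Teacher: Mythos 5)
Your proposal is correct and follows essentially the same route as the paper: both directions come down to inserting the unit-sum relations of the magic biunitary into the scalar identity $(ad)_{il}=(da)_{il}$ and then using row/column orthogonality to isolate a single product $a_{ij}a_{kl}$ multiplied by the difference of two distances. The only cosmetic difference is the order in which you multiply by the two projections in the (A)$\Rightarrow$(B) direction; the substance is identical.
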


\begin{proof}
 Suppose (\ref{item practical commutation: original}) holds, so $a$ and $d$ commute. We want to prove (\ref{item practical commutation: new}). Take points $i,j,k,l$ in $X$ such that $d(i,k) \neq d(j,l)$. Because $a$ and $d$ commute, we have
    \[\sum_{x=1}^n a_{ix} d(x,l) = \sum_{y=1}^n d(i,y) a_{yl}\]
    If we multiply this equality on the left by $a_{ij}$, we get
    \[a_{ij} d(j,l) = \sum_{y=1}^n d(i,y) a_{ij}a_{yl}\]
    because $a_{ix}a_{iy}$ is zero whenever $x \neq y$. Similarly, by multiplying this by $a_{kl}$ on the right, we get
    \[a_{ij} a_{kl} d(j,l) = d(i,k) a_{ij} a_{kl}.\]
    Because $d(j,l) \neq d(i,k)$ this equality is only possible if $a_{ij}a_{kl}$ is zero which is what we wanted to prove.

Conversely, suppose (\ref{item practical commutation: new}) holds. We want to prove (\ref{item practical commutation: original}), so we need $a$ and $d$ to commute. But for every $i,j$ in $X$, we have
$$ (a  d - d  a)_{ij} = \sum_{x=1}^n a_{ix} d(x,j) - d(i,x)a_{xj} =  \sum_{x,y=1}^n d(x,j) a_{ix} a_{yj} - d(i,y) a_{ix} a_{yj} = 0$$
The last equality expresses the given fact that $a_{ix} a_{yj}$ is zero whenever $d(x,j)$ is different from $d(i,y)$. We also used the fact that the rows and columns of $a$ sum up to the unit element.
\end{proof}

Now we can start proving that (\ref{item CQG on FMS: 1-isometric}) implies (\ref{item CQG on FMS: banicas notion}) in Theorem \ref{theorem equivalence for CQG acting on finite metric space}.
Using the notations of that theorem, we suppose $L_d((\iota \otimes \omega) \alpha(f)) \leq L_d(f)$
for every $f \in C(X)$ and every state $\omega$ on $A$. We want to prove $a  d = d a$.

Before we give the actual proof, we need another lemma.
The formulation of that lemma needs some notations.
Since we are working in a finite metric space, the set $D = \{d(i,j) \; | \; i,j \in X\}$ of all distances is a finite set.
So we can rearrange the numbers in $D$, writing $D = \{d_0, d_1, \cdots, d_N\}$ where $d_k < d_l$ if $k < l$.
For every distance $d_\gamma \in D$, and every point $i \in X$, we can look at the set of all points that are at a
distance $d_\gamma$ from $i$. We denote this set by $V_i^\gamma = \{j \in X \; | \; d(i,j) = d_\gamma\}$.

\begin{lemma} \label{theorem main lemma for equivalence CQG on FMS: 1-isometric implies commutation}
We use the notations of Theorem \ref{theorem equivalence for CQG acting on finite metric space} and assume the inequality in (\ref{item CQG on FMS: 1-isometric}).
If a state $\omega$ on $A$ is one on an element $a_{ij}$, then it is zero on every $a_{kl}$ with $d(i,k) \neq d(j,l)$.
In other words: for all $\gamma \in \{0, \cdots, N\}$, all $i,j \in X$ and all $\omega \in \mathcal{S}(A)$ with $\omega(a_{ij}) = 1$, we have
\[\begin{array}{c}
\forall k \in V_i^\gamma, \forall l \not\in V_j^\gamma: \omega(a_{kl})=0, \\
\forall k \not\in V_i^\gamma, \forall l \in V_j^\gamma: \omega(a_{kl})=0.
\end{array} \]
\end{lemma}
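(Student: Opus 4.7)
The plan is to reformulate the lemma as a statement about the doubly stochastic matrix $P=(p_{kl})$ with $p_{kl}:=\omega(a_{kl})$, namely that $p_{kl}\bigl(d(i,k)-d(j,l)\bigr)^2=0$ for every pair $(k,l)$. Both clauses of the lemma follow at once: if $k\in V_i^\gamma$ and $l\notin V_j^\gamma$ then $d(i,k)=d_\gamma\neq d(j,l)$, forcing $\omega(a_{kl})=0$, and the symmetric case is identical. The relations~(\ref{equation properties magic biunitary}) together with positivity of $\omega$ make $P$ doubly stochastic, and from $\omega(a_{ij})=1$ combined with $a_{ij}+\sum_{l'\neq j}a_{il'}=1$ (plus the column analogue) we immediately read off $p_{il'}=0$ for $l'\neq j$ and $p_{k'j}=0$ for $k'\neq i$.

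First I would test the $1$-isometric hypothesis with the $1$-Lipschitz function $f(l'):=d(j,l')$. Since $p_{ij}=1$ forces $\bigl((\iota\otimes\omega)\alpha(f)\bigr)(i)=f(j)=0$, the Lipschitz constraint yields $\sum_{l'}p_{kl'}d(j,l')\leq d(i,k)$ for every $k$. To obtain the dual bound running down the columns of $P$, I would invoke the Kac structure: $S$ is a $*$-anti-automorphism with $S(a_{kl})=a_{lk}$ and $S^2=\iota$, so $\omega\circ S$ is again a state on $A$ whose matrix of values on the generators is $P^T$. Applying~(\ref{inequality 1-isometric}) to $\omega\circ S$ and $f(k'):=d(i,k')$ yields $\sum_{k'}p_{k'l}d(i,k')\leq d(j,l)$ for every $l$. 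Summing the first inequality over $k$ and the second over $l$ and using the stochasticity of $P$, both reduce to $\sum_{l'}d(j,l')\leq\sum_k d(i,k)$ and its reverse, hence $\sum_k d(i,k)=\sum_l d(j,l)$, which forces per-term equality:
\begin{equation*}
\text{(a)}\ \ \sum_{l'}p_{kl'}d(j,l')=d(i,k)\ \forall k,\qquad \text{(b)}\ \ \sum_{k'}p_{k'l}d(i,k')=d(j,l)\ \forall l.
\end{equation*}

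Finally I would expand the sum of squares
\[\sum_{k,l}p_{kl}\bigl(d(i,k)-d(j,l)\bigr)^2 = \sum_k d(i,k)^2 - 2\sum_{k,l}p_{kl}d(i,k)d(j,l) + \sum_l d(j,l)^2,\]
where the outer sums use the row/column stochasticity. The cross term evaluates in two different ways: by~(a) it equals $\sum_k d(i,k)\cdot d(i,k)=\sum_k d(i,k)^2$, and by~(b) it equals $\sum_l d(j,l)^2$. In particular $\sum_k d(i,k)^2=\sum_l d(j,l)^2$, and the whole expression collapses to $0$. Non-negativity of every summand then forces $p_{kl}(d(i,k)-d(j,l))^2=0$ for all $(k,l)$, which is the desired vanishing. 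The step I expect to be the main obstacle is the promotion of the Lipschitz inequality to the equality~(a): the hypothesis applied to $\omega$ alone only yields one direction, and closing the sum genuinely requires a second application of $1$-isometry via $\omega\circ S$, for which the Kac antipode is essential; once (a) and (b) are in hand, the sum-of-squares cancellation is essentially automatic.
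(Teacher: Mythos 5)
Your proof is correct, and it takes a genuinely different route from the paper's. The paper argues by induction on the sorted distance values $d_0<\cdots<d_N$: at stage $\gamma+1$ it tests the $1$-isometric inequality against the same function $D_j(x)=d(j,x)$ at the pair $(k,i)$, invokes the induction hypothesis to kill the contributions of the distances $d_c$ with $c\le\gamma$, and then observes that a convex combination of numbers $\ge d_{\gamma+1}$ cannot be $\le d_{\gamma+1}$ unless the coefficients of the strictly larger distances vanish; the transposed clause is obtained, exactly as in your argument, by passing to the state $\omega\circ S$ via the Kac antipode $S(a_{kl})=a_{lk}$ (which is where faithfulness enters for both of you). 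You use the same two ingredients --- the test functions $d(j,\cdot)$, $d(i,\cdot)$ and the antipode --- but dispense with the induction entirely: summing the row inequalities $\sum_l p_{kl}\,d(j,l)\le d(i,k)$ over $k$ and the column inequalities over $l$, the double stochasticity of $P=(\omega(a_{kl}))$ forces both families to be equalities, and your sum-of-squares identity $\sum_{k,l}p_{kl}(d(i,k)-d(j,l))^2=0$ then localizes the conclusion to every entry simultaneously. Each step checks out (in particular $((\iota\otimes\omega)\alpha(D_j))(i)=0$ from $p_{ij}=1$, and $\omega\circ S$ is a state with matrix $P^{T}$ because $S$ is a unital $*$-anti-automorphism in the Kac case). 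What your version buys is brevity and a transparent structural picture --- $P$ is a doubly stochastic coupling supported on the distance-preserving pairs, a rigidity statement proved by an averaging argument --- whereas the paper's induction is more pedestrian but only ever handles one distance level at a time and needs no global identity.
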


\begin{proof}

We prove this lemma by induction on $\gamma$.

\underline{Step 1:} First of all, we want to check the lemma for $\gamma = 0$.
We fix points $i,j \in X$ and a state $\omega$ on $A$ such that $\omega(a_{ij})=1$.
The value $\gamma=0$ corresponds with the smallest distance in the metric space, which is zero.

First, we take $k \in V_i^0$. This means that $d(i,k) = d_0 = 0$, so $k=i$.
We also take some $l \not\in V_j^0$. (hence $l \neq j$) and we want to show that $\omega(a_{kl})$ is zero.
But $\omega(a_{kl}) = \omega(a_{il}) \leq \sum_{x \neq j} \omega(a_{ix})$,
because all $a_{ix}$ are positive and $\omega$ is a positive map.
Moreover,  $\sum_{x\in X}a_{ix}=1$ and since $\omega(1) = 1$,
we have $\omega(a_{kl}) \leq 1 - \omega(a_{ij}) = 0$. Hence $\omega(a_{kl})=0$,
which is the first item we had to prove.

Secondly, we take $k \not\in V_i^0$, so $k \neq i$, and $l \in V_j^0$, so $l=j$.
Then we have, by a similar argument as before, that
$\omega(a_{kl}) = \omega(a_{kj}) \leq \sum_{x \neq i} \omega(a_{xj}) = 1 - \omega(a_{ij}) = 0$, so $\omega(a_{kl})=0$.
\bigskip

\underline{Step 2:} We proceed by induction.
We suppose that the lemma holds for every value in $\{0, \dots, \gamma\}$ for a certain $\gamma$.
We want to prove that it also holds for $\gamma + 1$. Again, we fix points $i$ and $j$ in $X$,
and a state $\omega$ on $A$ such that $\omega(a_{ij})=1$. We take $k \in V_i^{\gamma + 1}$,
and $l \not\in V_j^{\gamma + 1}$.

We are going to use the inequality
\begin{equation} \label{inequality 1-isometric for classical metric space}
L_d((\iota \otimes \omega) \alpha(f)) \leq L_d(f)
\end{equation}
for the chosen state $\omega$ and for $f=D_j$, where $D_j: X \to \mathbb{C}: x \mapsto d(x,j)$.
Using the triangle inequality, it is easy to check that $L_d(D_j)=1$.
So inequality (\ref{inequality 1-isometric for classical metric space}) implies
\[\frac{|(\iota \otimes \omega) \alpha(D_j)(k) - (\iota \otimes \omega) \alpha(D_j)(i)|}{d(k,i)} \leq 1.\]
Notice that $D_j=\sum_{x\in X}d(j,x)\delta_x$, so, using the formula from Theorem \ref{theorem equivalence for CQG acting on finite metric space} for the $\alpha(\delta_x)$, we get
\[\left| \sum_{x \in X} d(j,x) \omega(a_{kx}) - \sum_{x \in X} d(j,x) \omega(a_{ix}) \right| \leq d(i,k) = d_{\gamma + 1}\]
In the first step, we have proven that $\omega(a_{ix})=0$ whenever $x \neq j$. We also know that $d(j,j)=0$.
Those two facts give
\begin{equation} \label{inequality reduced to convex combination}
\left| \sum_{x \in X} d(j,x) \omega(a_{kx}) \right| \leq d_{\gamma + 1}.
\end{equation}
Because of the induction hypothesis, we have that $\omega(a_{kx})=0$ if $d(j,x) < d_{\gamma + 1}$.
Indeed, in that case, we can find a value $c \in \{0, \cdots, \gamma\}$ such that $d(j,x)=d_c$ and hence $x \in V_j^c$.
On the other hand, since $k \in V_i^{\gamma + 1}$, we know that $k \not\in V_i^c$. As $c\le\gamma$, it follows from
the induction hypothesis that $\omega(a_{kx})=0$.
This means that the left hand side of inequality (\ref{inequality reduced to convex combination})
is a convex combination of those distances $d(j,x)$ that are at least $d_{\gamma + 1}$,
although the convex combination itself is smaller than $d_{\gamma + 1}$.
It is clear that is only possible if the co\"effici\"ents of the distances $d(j,x) > d_{\gamma + 1}$ are zero.
So, $\omega(a_{kx})=0$ unless $d(j,x) = d_{\gamma + 1}$. In particular,
since $l \not\in V_j^{\gamma + 1}$, this implies that $\omega(a_{kl}) = 0$.

We still have to prove the second case, for $k \not\in V_i^{\gamma + 1}$ and $l \in V_j^{\gamma + 1}$.
In this case, we define a new state
\[\omega': A \rightarrow \mathbb{C}: a \mapsto \omega(S(a))\]
where $S$ is the antipode of $(A,\Delta)$. Since $\omega'(a_{ji}) = \omega (S(a_{ji}))=\omega(a_{ij})=1$,
we can use the first case of this lemma on the distance $\gamma + 1$, the points $j$ and $i$ in $X$ and the state $\omega'$.
Since $l \in V_j^{\gamma + 1}$ and $k \not\in V_i^{\gamma + 1}$, the first case states that $0=\omega'(a_{lk})=\omega(a_{kl})$
and this concludes the proof.
\end{proof}

Using the previous lemma, we now need to show the commutation of $a$ and $d$ to complete
the proof of the first implication of Theorem \ref{theorem equivalence for CQG acting on finite metric space}.
Choose any  $i,j,k,l \in X$ with $d(i,k) \neq d(j,l)$. By Lemma \ref{theorem practical expression commutation relation} it suffices to show that
$a_{ij}a_{kl}=0$. Suppose that the product $a_{ij}a_{kl}$ is nonzero. Then also $a_{ij}a_{kl}a_{ij} = (a_{ij}a_{kl})(a_{ij}a_{kl})^*$
is nonzero. Hence we can find a state $\omega$ on $A$ such that $\omega(a_{ij}a_{kl}a_{ij})$ is nonzero.
But then also $\omega(a_{ij})$ is nonzero, because of the Cauchy-Schwarz inequality:
\[0 < \omega(a_{ij}(a_{kl}a_{ij}))^2 \leq \omega(a_{ij}) \omega(a_{ij}a_{kl}a_{ij}).\]
Using this state $\omega$, we define a new state
\[\omega_{ij}: A \rightarrow \mathbb{C}: x \mapsto \frac{\omega(a_{ij}xa_{ij})}{\omega(a_{ij})}.\]
One can easily check that this is indeed a state and that $\omega_{ij}(a_{ij})=1$.
So we can use the first case of the Lemma \ref{theorem main lemma for equivalence CQG on FMS: 1-isometric implies commutation} on the number $\gamma$
for which $d(i,k)=d_\gamma$, on the points $i$ and $j$, and on the state $\omega_{ij}$, and conclude that
$\omega_{ij}(a_{kl})=0$. But this clearly contradicts the fact that
$\omega_{ij}(a_{kl}) = \frac{\omega(a_{ij}a_{kl}a_{ij})}{\omega(a_{ij})}$
is nonzero because of the choice of $\omega$. Therefore $a_{ij}a_{kl}=0$, and this concludes the proof of this first implication.

\subsection{Part 2: (\ref{item CQG on FMS: banicas notion}) implies (\ref{item CQG on FMS: 1-isometric})}

Since this part of the theorem will not use anything specific about the CQG-structure we have on $A$ (except for the fact that the matrix $a$ is a magic biunitary), we can reformulate this part in a more general setting.

\begin{theorem} \label{theorem equivalence CQG on FMS: commutation implies 1-isometric for magic biunitary}
Let $(X,d)$ be a finite metric space with $n$ points, $A$ a unital C*-algebra and $\alpha: C(X) \rightarrow C(X) \otimes A$ a *-homomorphism.

Take elements $a_{ij}$ in $A$ such that $\alpha(\delta_j) = \sum_{i=1}^n \delta_i \otimes a_{ij}$. We use the same notations as in Theorem \ref{theorem equivalence for CQG acting on finite metric space} for $\delta_i$, $L_d$, $a$ and $d$. Suppose $a$ is a magic biunitary matrix. Then, if $ad=da$, it follows that $\alpha$ is 1-isometric.
\end{theorem}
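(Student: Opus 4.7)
The plan is to reduce the bound to classical Kantorovich-Rubinstein duality on $X$ and then, for each pair of points, to produce a classical transport plan whose existence is guaranteed by exactly the commutation hypothesis via Strassen's theorem.

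Fix a state $\omega$ on $A$, a function $f\in C(X)$, and two points $i,k\in X$; set $g:=(\iota\otimes\omega)\alpha(f)$. Since $g(i)=\sum_{j}f(j)\omega(a_{ij})$ and similarly for $g(k)$, I introduce the probability distributions $p_j:=\omega(a_{ij})$ and $q_l:=\omega(a_{kl})$ on $X$ (they are probabilities because each row of the magic biunitary $a$ sums to $1$). The desired inequality $|g(i)-g(k)|\le L_d(f)\,d(i,k)$ follows at once if one can exhibit a classical coupling $\pi=(\pi_{jl})_{j,l\in X}$ with $\pi_{jl}\ge 0$, marginals $p$ and $q$, and support contained in
\[
E_{ik}:=\{(j,l)\in X\times X:d(j,l)=d(i,k)\}:
\]
indeed then $g(i)-g(k)=\sum_{j,l}(f(j)-f(l))\pi_{jl}$, and $|f(j)-f(l)|\le L_d(f)\,d(j,l)=L_d(f)\,d(i,k)$ on the support of $\pi$, combined with $\sum_{j,l}\pi_{jl}=1$, gives $|g(i)-g(k)|\le L_d(f)\,d(i,k)$.

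By Strassen's theorem (equivalently, the weighted form of Hall's marriage theorem) applied to the bipartite transport problem on $(X,p)$ and $(X,q)$ with admissible edges $E_{ik}$, such a coupling exists if and only if
\[
p(S)\le q(\mathcal{N}(S))\quad\text{for every }S\subseteq X,
\]
where $\mathcal{N}(S):=\{l\in X:\exists j\in S,\ (j,l)\in E_{ik}\}$. This Hall-type inequality is the algebraic heart of the argument, and is where the hypothesis $ad=da$ enters. To check it, I would introduce $P_S:=\sum_{j\in S}a_{ij}$ and $Q_T:=\sum_{l\in T}a_{kl}$; each is a projection in $A$ since the magic biunitary property makes the summands pairwise orthogonal projections. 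By Lemma~\ref{theorem practical expression commutation relation}, $ad=da$ is equivalent to $a_{ij}a_{kl}=0$ whenever $d(j,l)\neq d(i,k)$; in particular, for every $j\in S$ and every $l\notin\mathcal{N}(S)$ the product $a_{ij}a_{kl}$ vanishes. Summing over such pairs yields $P_S\,Q_{X\setminus\mathcal{N}(S)}=0$, so the range of $P_S$ is contained in the range of $1-Q_{X\setminus\mathcal{N}(S)}=Q_{\mathcal{N}(S)}$; equivalently $P_S\le Q_{\mathcal{N}(S)}$ as projections. Applying the positive functional $\omega$ gives $p(S)=\omega(P_S)\le\omega(Q_{\mathcal{N}(S)})=q(\mathcal{N}(S))$, as required.

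Once the coupling exists, the pointwise Lipschitz estimate holds as indicated, and taking the supremum over $i\neq k$ yields $L_d((\iota\otimes\omega)\alpha(f))\le L_d(f)$, i.e.\ $\alpha$ is $1$-isometric. The main obstacle is conceptual rather than technical: recognising that the commutation relation $ad=da$ is precisely the algebraic statement underlying Hall's marginal inequality via the projection comparison $P_S\le Q_{\mathcal{N}(S)}$, and hence exactly what is needed to invoke Strassen's theorem; once this bridge to classical optimal transport is in place, the rest is routine.
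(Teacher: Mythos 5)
Your proof is correct and follows essentially the same route as the paper: reduce to the existence of a coupling of the row-distributions $\omega(a_{x\cdot})$ and $\omega(a_{y\cdot})$ supported on distance-preserving pairs, establish the Hall-type marginal inequality from $ad=da$ via the projection comparison, and deduce the Lipschitz bound. The only difference is that you invoke Strassen's theorem where the paper proves the same coupling-existence statement directly via the max-flow-min-cut theorem.
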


We use the notations from the theorem, and we suppose that the magic biunitary matrix $a$ commutes with $d$. We want to prove that $L_d((\iota \otimes \omega) \alpha(f)) \leq L_d(f)$ for every $f \in C(X)$ and every state $\omega$ on $A$.

First note that it is sufficient to prove the following lemma:

\renewcommand\arraystretch{1.3}
\begin{lemma} \label{theorem use of combinatorial lemma for CQG on FMS}
 Using the notations and assumptions of Theorem \ref{theorem equivalence CQG on FMS: commutation implies 1-isometric for magic biunitary}, we fix elements $x,y \in X$ and a state $\omega$ on $A$. Then there exist positive numbers $\lambda_{ij} \; (i,j \in X)$ such that for all $i,j \in X$, we have
\[\left\{ \begin{array}{l}
\sum_{j \in X} \lambda_{ij} = \omega(a_{xi}) \\
\sum_{i \in X} \lambda_{ij} = \omega(a_{yj}) \\
\lambda_{ij} = 0 \mbox{ if } d(i,j) \neq d(x,y)
\end{array} \right.\]
\end{lemma}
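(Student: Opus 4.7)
The plan is to regard the $\lambda_{ij}$ as a fractional transportation plan between two probability distributions on $X$. Set $\mu_i := \omega(a_{xi})$ and $\nu_j := \omega(a_{yj})$; since $a=(a_{ij})$ is magic biunitary, $\sum_i a_{xi} = \sum_j a_{yj} = 1$, so both $\mu$ and $\nu$ are probability distributions on $X$. Let $E := \{(i,j) \in X \times X : d(i,j) = d(x,y)\}$. What the lemma asks for is then a non-negative coupling $(\lambda_{ij})$ of the marginals $\mu$ and $\nu$ supported on $E$.

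By the max-flow min-cut theorem (equivalently, the fractional version of Hall's marriage theorem), such a coupling exists if and only if
\[\sum_{i \in S} \mu_i \;\leq\; \sum_{j \in N(S)} \nu_j \qquad \text{for every } S \subseteq X,\]
where $N(S) := \{j \in X \mid \exists i \in S : (i,j) \in E\}$. So the task reduces to verifying this combinatorial feasibility condition for the marginals $\mu,\nu$ coming from $\omega$.

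To verify it, fix $S \subseteq X$ and set $T := N(S)$, $p_S := \sum_{i \in S} a_{xi}$, $q_T := \sum_{j \in T} a_{yj}$. Because the entries in a row and in a column of a magic biunitary are pairwise orthogonal projections summing to $1$, both $p_S$ and $q_T$ are projections in $A$. By Lemma \ref{theorem practical expression commutation relation} (applied to the standing assumption $ad=da$), $a_{xi}a_{yj} = 0$ whenever $d(i,j) \neq d(x,y)$. For $i \in S$ and $j \notin T$ the pair $(i,j)$ does not lie in $E$ by definition of $N(S)$, hence $a_{xi}a_{yj}=0$. Summing over such $i$ and $j$ yields $p_S (1 - q_T) = 0$, i.e.\ $p_S = p_S q_T$, so $p_S \leq q_T$ as projections. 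Applying the positive functional $\omega$ gives $\sum_{i\in S} \mu_i = \omega(p_S) \leq \omega(q_T) = \sum_{j \in T} \nu_j$, which is the required inequality.

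The main conceptual obstacle is recognising the lemma as a classical bipartite transportation problem and selecting the right feasibility criterion; once that is in place, the C*-algebraic content of $a$ being a magic biunitary, together with Lemma \ref{theorem practical expression commutation relation}, delivers the Hall-type condition essentially for free.
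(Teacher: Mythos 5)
Your proposal is correct and follows essentially the same route as the paper: the paper's Lemma \ref{theorem combinatorial lemma} is exactly the fractional Hall/max-flow-min-cut feasibility criterion you invoke (proved there via a flow network), and the verification of the marginal inequality via the orthogonality $a_{xi}a_{yj}=0$ for $d(i,j)\neq d(x,y)$ and the comparison of the resulting projections under $\omega$ is the same argument the paper gives.
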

\renewcommand\arraystretch{2}

Suppose the lemma holds, then we can prove Theorem \ref{theorem equivalence CQG on FMS: commutation implies 1-isometric for magic biunitary}.
Indeed, we fix a state $\omega \in \mathcal{S}(A)$ and a function $f \in C(X)$. Now we can rewrite
\[(\iota \otimes \omega) \alpha(f) =
\sum_{i \in X} f(i) (\iota \otimes \omega) \alpha(\delta_i) = \sum_{i,j \in X} f(i) \omega(a_{ji}) \delta_j\]

Choose elements $x,y \in X$ with $x\neq y$. Using Lemma~\ref{theorem use of combinatorial lemma for CQG on FMS}, we can write
\begin{eqnarray*}
\frac{\left|(\iota \otimes \omega) \alpha(f) (x) - (\iota \otimes \omega) \alpha(f) (y) \right|}{d(x,y)} & = & \frac{\left| \sum_{i \in X} f(i) (\omega(a_{xi}) - \omega(a_{yi})) \right|}{d(x,y)} \\
& = & \frac{\left| \sum_{i \in X} f(i) \left( \sum_{j \in X} \lambda_{ij} - \sum_{j \in X} \lambda_{ji} \right) \right|}{d(x,y)} \\
& = & \frac{\left| \sum_{i,j \in X} \lambda_{ij}(f(i)-f(j)) \right|}{d(x,y)} \\
& = & \left| \sum_{i,j \in X} \lambda_{ij} \; \frac{f(i)-f(j)}{d(i,j)} \right| \\
& \leq & \sum_{i,j \in X} \lambda_{ij} \; L_d(f) \\
& = & L_d(f)
\end{eqnarray*}

\renewcommand\arraystretch{1}

Because this holds for all elements $x$ and $y$ in $X$ with $x\neq y$, we find that
\[L_d((\iota \otimes \omega) \alpha(f)) \leq L_d(f)\]
for every $f \in C(X)$, which would conclude the proof of Theorem \ref{theorem equivalence CQG on FMS: commutation implies 1-isometric for magic biunitary}.
\medskip

Of course, we still have to prove Lemma \ref{theorem use of combinatorial lemma for CQG on FMS}. This lemma is combinatorial,
and its proof will use a famous theorem in graph theory. To formulate this theorem, we need to introduce some notations.

The graph theory we need concerns flow networks. A flow network is a set $V$ of vertices, and a set $E \subseteq V \times V$
of directed edges. Every edge $(u,v)$ in $E$ has a positive capacity $c(u,v)$. We also fix two vertices $s$ and $t$,
called the source and the sink.

An $s$-$t$-flow in such a network is a mapping from the set of edges to the positive numbers,
which maps every edge $(v,w) \in E$ on $f(v,w)$, the 'flow from $v$ to $w$'. The conditions are
that the flow $f(v,w)$ through an edge is always smaller than the capacity $c(v,w)$ of the edge.
Secondly, the flow can not stay inside a vertex (except for the source and the sink).
So the flow entering a vertex must equal the flow leaving the vertex, or
$\sum_{u \in V} f(u,v) = \sum_{w \in V} f(v,w)$ for every vertex $v \in V \backslash \{s,t\}$.
In this mathematical notation we suppose that the capacity $c(v,w)$ (and hence also the flow $f(v,w)$)
is zero when there is no edge $(v,w)$ in $E$.

The value $F$ of an $s$-$t$-flow is then the sum of all the flow leaving the source $\sum_{v \in V} f(s,v)$.
This equals the flow entering the sink.

An $s$-$t$-cut is a partition of the vertices. We cut the graph into two pieces, partitioning the set of vertices
in two sets, $S$ and $T$ in such a way that $S$ contains the source $s$ and $T$ contains the sink $t$.
The capacity of such a cut is then the sum of the capacities of all edges crossing the cut from $S$ to $T$.
Mathematically, this is $\sum_{v \in S, w \in T} c(v,w)$.

Now we can formulate the famous max-flow-min-cut theorem briefly.

\begin{theorem}[Max-flow-min-cut theorem]
The maximal value of an $s$-$t$-flow is equal to the minimal capacity of an $s$-$t$-cut.
\end{theorem}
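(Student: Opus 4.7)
The plan is to establish both inequalities $\max F \leq \min C$ and $\max F \geq \min C$ between the value of any $s$-$t$-flow and the capacity of any $s$-$t$-cut, where the first is the standard weak duality estimate and the second is proved by exhibiting an explicit cut matching a maximum flow.

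For weak duality, I would fix an arbitrary flow $f$ with value $F$ and an arbitrary $s$-$t$-cut $(S,T)$, and show that $F$ equals the net flow across the cut. Summing the conservation equations $\sum_u f(u,v) = \sum_w f(v,w)$ over all $v \in S \setminus \{s\}$ and using that the flow out of $s$ is $F$, the internal terms (edges inside $S$) cancel and one obtains
\begin{equation*}
F = \sum_{v \in S, w \in T} f(v,w) - \sum_{v \in S, w \in T} f(w,v).
\end{equation*}
Since $0 \leq f(v,w) \leq c(v,w)$ on every edge and $f \geq 0$, the right-hand side is bounded above by $\sum_{v \in S, w \in T} c(v,w)$, which is the capacity of the cut. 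This immediately yields $\max F \leq \min C$.

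For the reverse inequality, I would use the augmenting path / residual network argument. Given any flow $f$, form the residual network $G_f$ with forward capacity $c(u,v) - f(u,v)$ on each original edge and backward capacity $f(u,v)$ on its reverse. The key lemma is: $f$ is of maximum value if and only if there is no directed $s$-$t$-path in $G_f$. The ``only if'' direction follows because any such path can be used to push a positive amount of additional flow (equal to the bottleneck residual capacity) along it, strictly increasing the value. For the ``if'' direction, assume no augmenting path exists and let $S$ be the set of vertices reachable from $s$ in $G_f$, $T = V \setminus S$. Then $s \in S$, $t \in T$, and every original edge $(u,v)$ with $u \in S, v \in T$ must satisfy $f(u,v) = c(u,v)$ (otherwise $v$ would be reachable), while every edge $(v,u)$ with $v \in T, u \in S$ must satisfy $f(v,u) = 0$. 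Plugging into the identity from the weak-duality step gives $F = \sum_{v\in S, w\in T} c(v,w)$, so this cut realizes the value of $f$; combined with weak duality, $f$ is maximal and the minimum cut equals the maximum flow.

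The main obstacle is establishing existence of a maximum flow in the first place, since for real-valued capacities the Ford--Fulkerson augmentation procedure need not terminate. I would handle this by a compactness argument: the set of feasible flows is a closed bounded subset of $\mathbb{R}^{|E|}$ defined by the linear conservation and capacity constraints, hence compact, and the value functional $f \mapsto \sum_v f(s,v)$ is continuous, so a maximum is attained. Alternatively, for rational (or integer) capacities one can invoke Edmonds--Karp, which chooses shortest augmenting paths by BFS and provably terminates in polynomially many steps; the general real case then follows by a density/continuity argument on the capacity vector. Once a maximum flow is shown to exist, the residual-network construction above produces the matching cut and completes the proof.
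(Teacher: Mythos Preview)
The paper does not actually prove this theorem: it is stated without proof as a classical result from graph theory (``the famous max-flow-min-cut theorem'') and then used as a black box in the proof of Lemma~\ref{theorem combinatorial lemma}. So there is no proof in the paper to compare against.

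Your argument is the standard one and is correct. The weak-duality step is exactly the usual telescoping computation, and the residual-network construction of a saturating cut is the classical Ford--Fulkerson reasoning. Your handling of the existence issue via compactness of the feasible polytope is the cleanest way to cover real-valued capacities (which is all the paper needs, since the capacities there are the numbers $\alpha_i$, $\beta_j$, and~$1$). Nothing is missing.
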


This theorem will be used to prove the following combinatorial lemma.

\begin{lemma} \label{theorem combinatorial lemma}
Suppose we have positive numbers $\alpha_1, \alpha_2, \cdots, \alpha_n$ and \\
$\beta_1, \beta_2, \cdots, \beta_n$ such that $\sum_{i=1}^n \alpha_i = \sum_{i=1}^n \beta_i = 1$.
Next, suppose that for every $i \in \{1, \cdots, n\}$, there exists a set
$V_i \subseteq \{1, \cdots, n\}$ such that for every subset $Z$ of $\{1, \cdots, n\}$
\begin{equation} \label{inequality in combinatorial lemma} \sum_{i \in Z} \alpha_i \leq \sum_{j \in \bigcup_{i \in Z} V_i} \beta_j.\end{equation}

\renewcommand\arraystretch{1.3}

Then, for $i,j \in \{1, \cdots, n\}$, we can take positive numbers $\lambda_{ij}$ such that for all $i,j \in \{ 1, \cdots, n\}$
\[ \left\{ \begin{array}{l}
\sum_{j=1}^n \lambda_{ij} = \alpha_i \\
\sum_{i=1}^n \lambda_{ij} = \beta_j \\
\lambda_{ij} = 0 \mbox{ if } j \not\in V_i
\end{array} \right.\]
\end{lemma}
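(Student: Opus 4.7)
The plan is to model the problem as a max-flow problem and invoke the max-flow-min-cut theorem. I build an $s$-$t$-flow network with vertex set $V=\{s,t\}\cup\{u_1,\ldots,u_n\}\cup\{v_1,\ldots,v_n\}$, equipped with edges $(s,u_i)$ of capacity $\alpha_i$, edges $(v_j,t)$ of capacity $\beta_j$, and ``middle'' edges $(u_i,v_j)$ of some sufficiently large capacity (any number strictly greater than $1$ will do) whenever $j\in V_i$. Once I exhibit an $s$-$t$-flow $f$ of value $1$, the restriction $\lambda_{ij}:=f(u_i,v_j)$ to the middle layer will be the desired system: $\lambda_{ij}=0$ automatically when $j\notin V_i$, and the equality $1=\sum_i\alpha_i=\sum_j\beta_j$ forces saturation of every source edge and every sink edge, whence flow conservation at each $u_i$ and each $v_j$ delivers the two marginal identities $\sum_j\lambda_{ij}=\alpha_i$ and $\sum_i\lambda_{ij}=\beta_j$.

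By max-flow-min-cut, it suffices to check that every $s$-$t$-cut has capacity at least $1$. Fix a cut $(S,T)$ with $s\in S$ and $t\in T$, and set $Z=\{i:u_i\in T\}$ and $W=\{j:v_j\in S\}$. If the cut has finite capacity then no middle edge crosses from $S$ to $T$; equivalently, whenever $i\notin Z$ and $j\in V_i$, we must have $v_j\in S$, i.e.\ $j\in W$. Thus $\bigcup_{i\in Z^c}V_i\subseteq W$, where $Z^c=\{1,\ldots,n\}\setminus Z$. Applying the hypothesis (\ref{inequality in combinatorial lemma}) to $Z^c$ gives
\[
\sum_{i\in Z}\alpha_i+\sum_{j\in W}\beta_j\ \geq\ \sum_{i\in Z}\alpha_i+\sum_{j\in\bigcup_{i\in Z^c}V_i}\beta_j\ \geq\ \sum_{i\in Z}\alpha_i+\sum_{i\in Z^c}\alpha_i\ =\ 1.
\]
The trivial cut $S=\{s\}$ has capacity exactly $\sum_i\alpha_i=1$, so the minimum cut equals $1$ and hence the maximum flow equals $1$ as well, producing the required flow.

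The main obstacle is the translation from the hypothesis to a cut estimate: one has to notice that the hypothesis must be applied to the complement $Z^c$ rather than to $Z$ itself, and that the ``$j\in V_i$'' pattern is exactly what rules out middle edges in any finite-capacity cut. After this combinatorial observation the remaining verifications — nonnegativity of the flow, saturation of the boundary edges, and extraction of the marginals via flow conservation — are immediate from the definition of a feasible $s$-$t$-flow of value $\sum_i\alpha_i$.
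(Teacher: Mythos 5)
Your proposal is correct and follows essentially the same route as the paper: the identical bipartite flow network with source edges of capacity $\alpha_i$ and sink edges of capacity $\beta_j$, the max-flow-min-cut theorem, and the same cut estimate (the paper applies the hypothesis (\ref{inequality in combinatorial lemma}) to $S\cap L$, which is your $Z^c$). The only blemish is the phrase ``if the cut has finite capacity then no middle edge crosses'' --- with middle capacities merely $>1$ every cut is finite, so the correct dichotomy is that a crossing middle edge already forces the cut capacity above $1$ (the paper's second case); this is what you evidently intend and it does not affect the argument.
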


\renewcommand\arraystretch{1}

\begin{proof}
We construct a flow network. We have $n$ vertices on the left hand side, which we label $l_1, l_2, \cdots, l_n$,
and $n$ vertices on the right hand side, labeled $r_1, r_2, \cdots, r_n$.
For short, we denote $L$ for $\{l_1, \cdots, l_n\}$ and $R$ for $\{r_1, \cdots,  r_n\}$.
For every $i \in \{1, \cdots, n\}$, we have an edge from the source $s$ to vertex $l_i$ with capacity
$\alpha_i$, and an edge from vertex $r_i$ to the sink $t$ with capacity $\beta_i$.
In the center, we have an edge from  vertex $l_i$ to every vertex $r_j$ with $j \in V_i$.
We give those edges capacity 1. We denote the set of vertices by $V$ and the set of edges by $E$.
From now on, we will see $V_i$ as the subset of all vertices $r_j$ with $j \in V_i$.

An example for $n=5$ can be found on figure \ref{figure example flow network}.
\begin{figure}[H]
\centering
\includegraphics[scale=.22, angle=90]{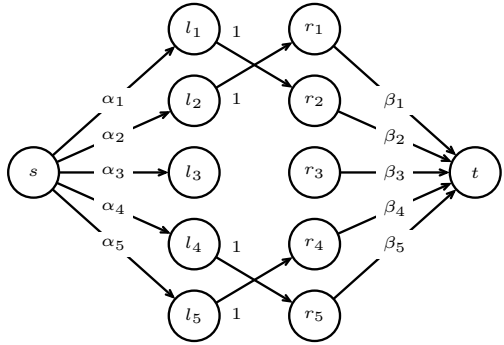}
\caption{Example of a flow network when $n=5$} \label{figure example flow network}
\center{$V_1=\{2\}, V_2=\{1\}, V_3=\emptyset, V_4=\{5\}, V_5=\{4\}$}
\end{figure}

We want to use the max-flow-min-cut theorem to find a maximal flow in this flow network. For this end, we investigate the minimal capacity of an $s$-$t$-cut. An example of an $s$-$t$-cut with capacity one is $S = \{s\}, T = V \backslash \{s\}$. We claim that this capacity is minimal. So we want to prove that every other $s$-$t$-cut has a capacity of at least one. Let us consider all possible cases.

\begin{itemize}

\item First we suppose $S$ contains non of the vertices on the left hand side: $L \subseteq T$. Then the capacity of the cut is
\[\sum_{\stackrel{v \in S}{w \in T}} c(v,w) \geq \sum_{w \in L} c(s,w) = \sum_{i=1}^n c(s,l_i) = \sum_{i=1}^n \alpha_i = 1.\]

\item If one of the central edges is crossing the cut from $S$ to $T$, then the capacity of the cut is bigger than one.
This is, if $S \cap L \neq \emptyset$ and $T \cap \bigcup_{l_i \in S} V_i \neq \emptyset$,
then we have a vertex $r_j \in T$ in one of the $V_i$, with $l_i \in S$.
Then $l_i$ belongs to $S$ and $r_j$ to $T$, so the capacity of the cut is bigger than $c(l_i, r_j)$,
which is 1 because $r_j \in V_i$.

\item The last case is the case where $S \cap L \neq \emptyset$ but $T \cap \bigcup_{l_i \in S} V_i = \emptyset$.
This means that $V_i \subseteq S$ when $l_i \in S$. If we now use the given inequality (\ref{inequality in combinatorial lemma}) on the set $S \cap L$, we get
\[\sum_{l_i \in S} \alpha_i \leq \sum_{r_j \in \bigcup_{l_i \in S} V_i} \beta_j\]
For the total capacity of the cut, this gives
\begin{multline*}
\sum_{\stackrel{v \in S}{w \in T}} c(v,w) \geq \sum_{w \in (T \cap L)} c(s,w) + \sum_{v \in (S \cap R)} c(v,t) \\
= \sum_{l_i \in T} \alpha_i + \sum_{r_j \in \bigcup_{l_i \in S} V_i} \beta_j \geq \sum_{l_i \in T} \alpha_i + \sum_{l_i \in S} \alpha_i = \sum_{i=1}^n \alpha_i = 1
\end{multline*}

\end{itemize}

We have proven that the minimal capacity of an $s$-$t$-cut in the flow network we consider is one.
Because of the max-flow-min-cut theorem, this implies that we can have a maximal flow $f$ of value one through this network.
If we denote the flow in the edge $(l_i, r_j)$ by $\lambda_{ij}$, we have found the positive numbers we were looking for:

\begin{itemize}

\item It is obvious that all $\lambda_{ij} = f(l_i, r_j)$ are positive.

\item For the first condition, we have to calculate $\sum_{j=1}^n \lambda_{ij} = \sum_{j=1}^n f(l_i, r_j)$.
This is the total amount of flow leaving the vertex $l_i$, so this has to equal the amount of flow entering $l_i$.
The only edge through which the flow can enter $l_i$, is the edge $(s,l_i)$, which has capacity $\alpha_i$.
In order to have a total flow of value one, the flow in every edge $(s,l_k)$ must reach its capacity $\alpha_k$ (since $\sum_{k=1}^n \alpha_k = 1$). Hence, in particular, the flow through the edge $(s,l_i)$ equals its capacity $\alpha_i$.
So we have proven that $\sum_{j=1}^n \lambda_{ij} = f(s, l_i) = \alpha_i$.

\item For the second condition, we calculate $\sum_{i=1}^n \lambda_{ij} = \sum_{i=1}^n f(l_i,r_j)$ similarly,
we see that this is the total amount of flow entering the vertex $r_j$. This is equal to the amount of flow leaving $r_j$, which is the flow through the edge $(r_j,t)$.
With a similar reasoning as for the first condition, we see that the flow $f(r_j,t)$ equals the capacity $\beta_j$ of the edge.

\item If $j \not\in V_i$, there is no edge from $l_i$ to $r_j$. Clearly, there cannot be any flow going from $l_i$ to $r_j$, so $\lambda_{ij} = f(l_i,r_j)$ is zero.
\qedhere
\end{itemize}
\end{proof}

We want to use this purely combinatorial lemma to prove Lemma \ref{theorem use of combinatorial lemma for CQG on FMS}.

\begin{proof}[Proof of Lemma \ref{theorem use of combinatorial lemma for CQG on FMS}:]
Fix elements $x$ and $y$ in $X$ and a state $\omega$ on $A$.
It is obvious that we want to use Lemma \ref{theorem combinatorial lemma} for the numbers
$\alpha_i = \omega(a_{xi})$ and $\beta_i = \omega(a_{yi})$.
To make the third condition of Lemma \ref{theorem combinatorial lemma} match with the third condition of
Lemma \ref{theorem use of combinatorial lemma for CQG on FMS}, we take $V_i$ to be the set $\{j \in X \; | \; d(i,j) = d(x,y)\}$.

For example, if the metric space looks like figure \ref{figure example metric space} (where the numbers represent
the distances between the points), and we chose $x$ to be $c_1$ and $y$ to be $c_2$,
then the flow network used in the proof of Lemma \ref{theorem combinatorial lemma} will look like the network in
figure \ref{figure flow network for given metric space} (which we also used as an example on figure \ref{figure example flow network}
in the proof of Lemma \ref{theorem combinatorial lemma}).

\begin{figure}[H]
\centering
\subfigure[]{\label{figure example metric space}
\includegraphics[clip=true, trim= 0 250 0 0, scale=.22,  angle=90]{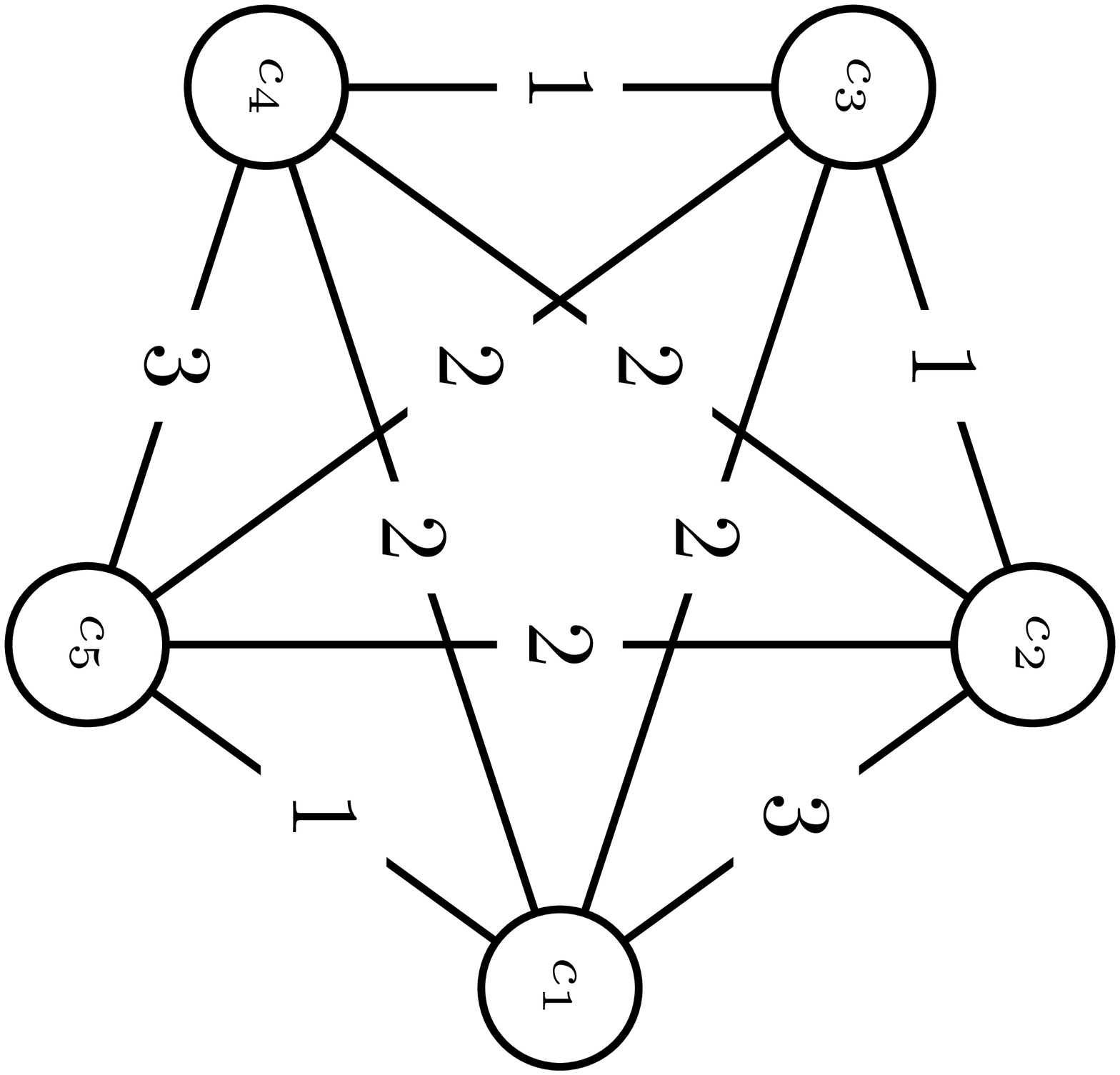}
}
\subfigure[]{\label{figure flow network for given metric space}
\includegraphics[scale=.22, angle=90]{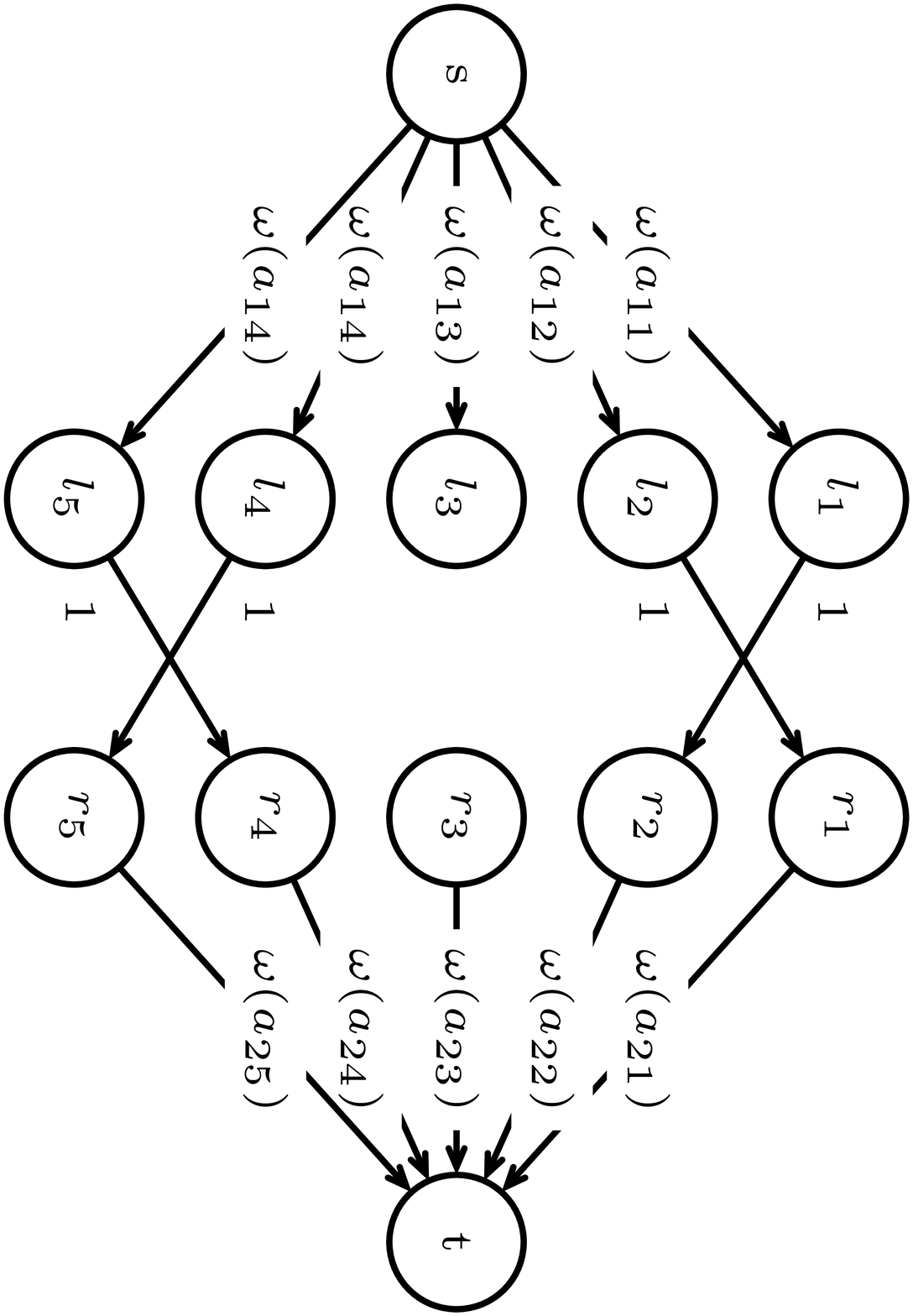}
}
\caption{An example of a metric space and the corresponding flow network}
\end{figure}

In order to use Lemma \ref{theorem combinatorial lemma}, we need to check the inequality
\[\sum_{i \in Z} \omega(a_{xi}) \ \ \leq \sum_{\stackrel{j\in X:}{\exists i \in Z: \; d(i,j) = d(x,y)}} \omega(a_{yj})\]
for every subset $Z \subseteq X$. But this follows from the fact that $a$ and $d$ commute. Indeed,
by using the commutation relation in the way we obtained in Lemma \ref{theorem practical expression commutation relation},
we get for $Z \subseteq X$
\[\sum_{i \in Z} a_{xi} = \sum_{i \in Z} a_{xi} \sum_{j \in X} a_{yj} =
\sum_{i \in Z} a_{xi} \sum_{\stackrel{j \in X:}{\exists i \in Z: \; d(i,j) = d(x,y)}} a_{yj}\]
because $a_{xi} \, a_{yj} = 0$ whenever $d(i,j) \neq d(x,y)$.

Because both factors in the previous product are projections, we have
\[\sum_{i \in Z} a_{xi}\ \ \leq \sum_{\stackrel{j\in X:}{\exists i \in Z: \; d(i,j) = d(x,y)}} a_{yj}.\]
Since $\omega$ is a positive map, we immediately get the desired inequality. Hence we can apply Lemma \ref{theorem combinatorial lemma} to obtain the desired positive numbers $\lambda_{ij}$.
\end{proof}

\subsection{Part 3: Equivalence of (\ref{item CQG on FMS: 1-isometric}) with (\ref{item CQG on FMS: full-isometric})}

Theorem \ref{theorem equivalence for CQG acting on finite metric space} also states that, in case of a finite classical space, the notions 1-isometric (Definition \ref{definition 1-isometric *-homomorphism}) and full-isometric (Definition \ref{definition full-isometric coaction}) coincide. We already know that (\ref{item CQG on FMS: full-isometric}) implies (\ref{item CQG on FMS: 1-isometric}). To prove that (\ref{item CQG on FMS: 1-isometric}) implies (\ref{item CQG on FMS: full-isometric}), we need another lemma.

\begin{lemma} \label{theorem *-product 1-isometric implies individual coactions 1-isometric}
Let $(B,L)$ be a CQMS and suppose we have CQGs $(C_i, \Delta_i)$ with bounded counits $\epsilon_i$ and coactions $\beta_i: B \to B \otimes C_i$ ($i=1, \cdots, m$). If $$\beta := \beta_1 \ast \cdots \ast \beta_m: B \to B \otimes C_1 \otimes \cdots \otimes C_m$$ is 1-isometric, then $\beta_i$ is 1-isometric for every $i \in \{1, \cdots, m\}$.
\end{lemma}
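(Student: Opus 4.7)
The strategy is to use the bounded counits to ``slice off'' the unwanted tensor factors from $\beta = \beta_1 \ast \cdots \ast \beta_m$, reducing everything to the single coaction $\beta_k$. Fix $k \in \{1, \ldots, m\}$, a state $\omega \in \mathcal{S}(C_k)$, and $b \in B$; the goal is to prove $L((\iota \otimes \omega) \beta_k(b)) \leq L(b)$.

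First, note that each bounded counit $\epsilon_i : C_i \to \mathbb{C}$ is automatically a state, being a unital $*$-character on a unital C*-algebra, and that the counit-for-coactions identity $(\iota \otimes \epsilon_i)\beta_i = \iota_B$ is a standard consequence of the CQG axioms. Consequently the product functional
$$\Omega := \epsilon_1 \otimes \cdots \otimes \epsilon_{k-1} \otimes \omega \otimes \epsilon_{k+1} \otimes \cdots \otimes \epsilon_m$$
is a state on $C_1 \otimes \cdots \otimes C_m$. Applying the 1-isometric hypothesis for $\beta$ to $\Omega$ immediately yields
$$L\bigl((\iota \otimes \Omega) \beta(b)\bigr) \leq L(b).$$

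The main step is the bookkeeping verification that $(\iota \otimes \Omega) \beta(b) = (\iota \otimes \omega) \beta_k(b)$. Writing $\beta$ as the nested composition
$$\beta = (\beta_1 \otimes \iota \otimes \cdots \otimes \iota)(\beta_2 \otimes \iota \otimes \cdots \otimes \iota) \cdots (\beta_{m-1} \otimes \iota)\beta_m,$$
I plan to collapse the extraneous factors one layer at a time: on elementary tensors one checks directly that applying $\epsilon_j$ in the $j$-th slot ``undoes'' the corresponding $\beta_j \otimes \iota \otimes \cdots$ via the counit identity, so that iterating for all $j \neq k$ leaves only $(\iota \otimes \omega)\beta_k(b)$. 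An induction on $m$, peeling off either the outermost $\beta_1$ or the innermost $\beta_m$ at each stage, makes this rigorous.

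Combining the two displayed facts gives $L((\iota \otimes \omega)\beta_k(b)) \leq L(b)$, which is exactly 1-isometricity of $\beta_k$. I do not expect any conceptual obstacle, only the bookkeeping of the collapsing step; the crucial ingredient enabling the whole argument is the assumed boundedness of the counits, which is what lets $\Omega$ be defined on the C*-algebraic tensor product and guarantees that the identity $(\iota \otimes \epsilon_i)\beta_i = \iota_B$ holds on all of $B$ rather than merely on a dense $*$-subalgebra.
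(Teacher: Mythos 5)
Your proposal is correct and follows essentially the same route as the paper: form the product state $\epsilon_1\otimes\cdots\otimes\omega\otimes\cdots\otimes\epsilon_m$, use the counit identity $(\iota\otimes\epsilon_j)\beta_j=\iota_B$ to collapse $\beta$ down to $\beta_k$, and apply the 1-isometric inequality for $\beta$ to this state. The only difference is cosmetic — the paper simply asserts the slicing identity with ``one can compute,'' whereas you sketch the inductive peeling that justifies it.
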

\begin{proof}
We fix $i \in \{1, \cdots, m\}$. Since $(\iota \otimes \epsilon_j) \beta_j = id_{B}$ for every $j \in \{1, \cdots, m\}$, one can compute that
$$(\iota \otimes \epsilon_1 \otimes \cdots \otimes \epsilon_{i-1} \otimes \iota \otimes \epsilon_{i+1} \otimes \cdots \otimes \epsilon_n) \beta = \beta_i.$$

Now we can easily prove that $\beta_i$ is 1-isometric if $\beta$ is 1-isometric: we first choose $b \in B$ and $\omega \in \mathcal{S}(C_i)$. Then, because of the previous formula for $\beta_i$, we have
$$L((\iota \otimes \omega) \beta_i(b)) = L((\iota \otimes \epsilon_1 \otimes \cdots \otimes \epsilon_{i-1} \otimes \omega \otimes \epsilon_{i+1} \otimes \cdots \otimes \epsilon_m) \beta(b)) \leq L(b)$$
since $\epsilon_1 \otimes \cdots \otimes \epsilon_{i-1} \otimes \omega \otimes \epsilon_{i+1} \otimes \cdots \otimes \epsilon_n$ is a state on $C_1 \otimes \cdots \otimes C_m$.
\end{proof}

\begin{proof}[Proof of $(\ref{item CQG on FMS: 1-isometric})$ implies $(\ref{item CQG on FMS: full-isometric})$ in Theorem \ref{theorem equivalence for CQG acting on finite metric space}]
We suppose that the coaction $\alpha$ of the CQG $(A, \Delta)$ on $(C(X), L_d)$ is 1-isometric, which means that the matrix $a = (a_{ij})$ commutes with the distance matrix. To prove that $\alpha$ is full-isometric, we first choose faithful QTGs $(C_i, \Delta_i, \beta_i)$ for $i=1, \cdots, m$ such that $\beta = \beta_1 \ast \cdots \ast \beta_m$ is 1-isometric. We want to prove that $\alpha \ast \beta$ is 1-isometric. We denote $X$ for $A \otimes C_1 \otimes \cdots \otimes C_n$, and we use $x_{ij}$ for the matrix elements in $X$ such that, for every $j \in \{1, \cdots, n\}$ we have
$$(\alpha \ast \beta) (\delta_j) = \sum_{k=1}^n \delta_i \otimes x_{ij}.$$
Then, by Theorem \ref{theorem equivalence CQG on FMS: commutation implies 1-isometric for magic biunitary}, it is sufficient to prove that the matrix $x = (x_{ij})_{ij}$ is a magic biunitary that commutes with the distance-matrix $d$.

We need to compute the elements $x_{ij}$. For every $k \in \{1, \cdots, m\}$ we can take elements $c_{ij}^{(k)}$ in $C_k$ such that $\beta_k(\delta_j) = \sum_{i=1}^n \delta_i \otimes c_{ij}^{(k)}$ for every $j \in \{1, \cdots, n\}$. We can also take elements $a_{ij}$ in $A$ such that for every $j \in \{1, \cdots, n\}$ we have $\alpha(\delta_j) = \sum_{i=1}^n \delta_i \otimes a_{ij}$. Then one can check that for every $i,j \in \{1, \cdots, n\}$, we have
\begin{equation} \label{equation matrix elements x_ij of *-product of coactions} x_{ij} = \sum_{k_1, \cdots, k_m =1}^n a_{ik_1} \otimes c_{k_1k_2}^{(1)} \otimes c_{k_2k_3}^{(2)} \otimes \cdots \otimes c_{k_{m-1}k_m}^{(m-1)} \otimes c_{k_mj}^{(m)} \end{equation}
Now we can check whether the elements $x_{ij}$ satisfy all the conditions needed to apply Theorem \ref{theorem equivalence CQG on FMS: commutation implies 1-isometric for magic biunitary}.
Firstly, since $\alpha$ and all the $\beta_k$ are coactions, we know that the matrix $a = (a_{ij})_{ij}$ and all matrices $c^{(k)} = \left( c_{ij}^{(k)} \right)_{ij}$ are magic biunitary matrices. From this, one can easily see that also $x$ will be a magic biunitary.

Secondly, since $\alpha$ is given to be 1-isometric, we know that $a$ commutes with $d$. We also supposed $\beta$ to be 1-isometric, which by Lemma \ref{theorem *-product 1-isometric implies individual coactions 1-isometric} implies that every coaction $\beta_k$ is 1-isometric. Notice that the conditions of Lemma \ref{theorem *-product 1-isometric implies individual coactions 1-isometric} are met here since all $C_k$ have bounded counits as they are faithfully acting on $C(X)$ \cite{Wang-Quantum_symmetry_groups_of_finite_spaces}. Hence all matrices $c^{(k)}$ commute with $d$.
Then, using (\ref{equation matrix elements x_ij of *-product of coactions}), it is straightforward to verify that $d$ commutes with $x$ as well.
\end{proof}

\section{Subgroup acting isometrically}

Classically, when a group $G$ is acting on a metric space $(X,d)$, the subset $H$ consisting of those elements $g$ of $G$ for which
the action $X\to X:x\mapsto gx$ is isometric forms a subgroup of $G$.
We show in Theorem~\ref{theorem largest quantum subgroup acting isometrically} that this remains true in the general quantum context for full-isometric coactions. Before stating the theorem, we need to introduce a definition.

\begin{definition}
Let $(A, \Delta)$ be a compact quantum group and $I \subseteq A$ a subset of $A$.
We say that $I$ is a \textbf{Woronowicz Hopf C*-ideal} if $I$ is a two-sided closed *-ideal in $A$ such that
\[(p\otimes p) \Delta(I) = \{0\}\]
where $p$ is the canonical projection $A\to A/I:a\mapsto a+I$.
\end{definition}

In the above situation, $A/I$ becomes a compact quantum group for the comultiplication defined by
\[\Delta_I: A/I \rightarrow A/I \otimes A/I: p(a) \mapsto (p \otimes p) \Delta(a).\]
The CQG $(A/I, \Delta_I)$ can be considered as a compact quantum subgroup of $(A,\Delta)$.

\begin{theorem} \label{theorem largest quantum subgroup acting isometrically}
If $\alpha$ is a faithful coaction of a CQG $(A, \Delta)$ with bounded counit $\epsilon$ on a CQMS $(B,L)$, then
there exists a proper Woronowicz Hopf C*-ideal $I$ in $A$ such that
\[\alpha_I: B \rightarrow B \otimes A/I: b \mapsto (\iota \otimes p_I) \alpha(b)\]
is a faithful and full-isometric coaction, where $p_I$ is the canonical projection of $A$ onto $A/I$.
The ideal $I$ can be chosen to be the smallest one possible, in the sense that for any other Woronowicz Hopf C*-ideal $J$ with $\alpha_J = (\iota \otimes p_J) \alpha$ a faithful and full-isometric coaction, we have that $I \subseteq J$.
\end{theorem}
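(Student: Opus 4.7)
The plan is to take $I := \bigcap_{J \in \mathcal{F}} J$, where $\mathcal{F}$ is the family of Woronowicz Hopf C*-ideals $J$ of $A$ for which $\alpha_J := (\iota \otimes p_J)\alpha$ is a faithful, full-isometric coaction, and then to verify that $I$ itself belongs to $\mathcal{F}$. The family $\mathcal{F}$ is nonempty: since $\epsilon$ is bounded, $\ker \epsilon$ is a proper Woronowicz Hopf C*-ideal with $A/\ker\epsilon \cong \C$ and induced trivial coaction $b \mapsto b \otimes 1$, which is trivially faithful and full-isometric. Minimality of $I$ is then built into the definition, and the containment $I \subseteq \ker\epsilon \subsetneq A$ shows properness. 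To see that $I$ is a Woronowicz Hopf C*-ideal, pass to the canonical dense Hopf *-subalgebra $\mathcal{A} \subseteq A$ of matrix coefficients: each $J \in \mathcal{F}$ restricts to a Hopf *-ideal $J \cap \mathcal{A}$, intersections of Hopf *-ideals in a Hopf *-algebra are again Hopf *-ideals, and under the standard bijection between Woronowicz Hopf C*-ideals of $A$ and Hopf *-ideals of $\mathcal{A}$, $I$ corresponds to $\bigcap_J (J \cap \mathcal{A})$. Faithfulness of $\alpha_I$ is immediate from surjectivity of $p_I$ combined with faithfulness of $\alpha$.

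The heart of the argument is the full-isometry of $\alpha_I$. Fix faithful QTGs $(C_i, \Delta_i, \beta_i)$ with $\beta := \beta_1 \ast \cdots \ast \beta_n$ being 1-isometric, write $C := C_1 \otimes \cdots \otimes C_n$, and let $\pi_J : A/I \to A/J$ denote the canonical surjection, noting that $(\iota \otimes \pi_J)\alpha_I = \alpha_J$. Consider the set $S$ of states on $A/I \otimes C$ of the form $\omega = (\chi \otimes \psi) \circ (\pi_J \otimes \iota_C)$ as $J$ ranges over $\mathcal{F}$, $\chi$ over $\mathcal{S}(A/J)$, and $\psi$ over $\mathcal{S}(C)$. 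For such $\omega$, using the relation $(\iota \otimes \pi_J)\alpha_I = \alpha_J$,
\[
(\iota \otimes \omega)(\alpha_I \ast \beta)(b) = (\iota \otimes \chi \otimes \psi)(\alpha_J \ast \beta)(b),
\]
whose $L$-value is bounded by $L(b)$ by the full-isometry of $\alpha_J$. By Lemma \ref{theorem inequality 1-isometric sufficient for weak*-dense convex set of states} it then suffices to show that the convex hull of $S$ is weak-* dense in $\mathcal{S}(A/I \otimes C)$.

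This density is the main technical obstacle. It reduces, via the weak-* density of product states on a minimal tensor product, to density of $\mathrm{conv}\{\chi \circ \pi_J : J \in \mathcal{F}, \chi \in \mathcal{S}(A/J)\}$ in $\mathcal{S}(A/I)$, which I expect to obtain from a Hahn--Banach separation argument: if this set were not weak-* dense, there would exist a self-adjoint $a \in A/I$ and $c \in \mathbb{R}$ with $\mu(a) > c$ for some state $\mu \in \mathcal{S}(A/I)$ yet $\max\sigma(\pi_J(a)) \leq c$ for every $J$, so $\pi_J(c \cdot 1 - a) \geq 0$ in every $A/J$. Since the self-adjoint decomposition $c \cdot 1 - a = (c \cdot 1 - a)_+ - (c \cdot 1 - a)_-$ has orthogonal parts, each *-homomorphism $\pi_J$ sends these to the positive/negative parts of $\pi_J(c \cdot 1 - a)$; positivity therefore forces $\pi_J((c \cdot 1 - a)_-) = 0$ for every $J$, and as $\bigcap_J \ker\pi_J = 0$ by construction of $I$, this yields $(c \cdot 1 - a)_- = 0$, i.e.\ $c \cdot 1 - a \geq 0$ in $A/I$, contradicting $\mu(a) > c$. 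With this density in hand, full-isometry of $\alpha_I$ follows and the theorem is established.
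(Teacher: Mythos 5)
Your overall architecture (quotient by a minimal ideal, reduce full-isometry of $\alpha_I$ to weak-$*$ density of a convex set of states, prove density by Hahn--Banach plus a positivity argument) matches the paper's, and your Hahn--Banach step is essentially a direct, correct proof of what the paper obtains through Lemmas \ref{theorem separating set with extra condition is only positive on positive elements}--\ref{theorem states composed with separating *-homomorphisms convex-weak *-dense}. But there is a genuine gap at the point you treat as routine: the claim that $I=\bigcap_{J\in\mathcal{F}}J$ is again a Woronowicz Hopf C*-ideal. Your argument rests on the assertion that intersections of Hopf $*$-ideals are Hopf $*$-ideals, and this is false. In $C(G)$ for a finite group $G$, the Hopf ideals are exactly $J_H=\{f: f|_H=0\}$ for subgroups $H$, and $J_{H_1}\cap J_{H_2}=J_{H_1\cup H_2}$ fails to be a coideal whenever $H_1\cup H_2$ is not a subgroup: taking $s\in H_1$, $t\in H_2$ with $st\notin H_1\cup H_2$, the function $\delta_{st}$ lies in the intersection but $\Delta\delta_{st}$ does not vanish on $(H_1\cup H_2)\times(H_1\cup H_2)$. (It is the \emph{sum} of Hopf ideals, corresponding to the intersection of subgroups, that is again a Hopf ideal; the intersection of ideals corresponds to the \emph{union} of subgroups, which must first be closed under products.) The "standard bijection" with Hopf $*$-ideals of the dense subalgebra is also not available in the generality you invoke it, but the coideal issue is the fatal one.

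What is really needed at this point is the statement that the family of isometric quantum subgroups is closed under taking products, i.e.\ that for $J_1,J_2\in\mathcal{F}$ the ideal $\ker\bigl((p_{J_1}\otimes p_{J_2})\Delta\bigr)$ again lies in $\mathcal{F}$; only then does $(p_{J_1}\otimes p_{J_2})\Delta(I)=\{0\}$ follow, and hence $(p_I\otimes p_I)\Delta(I)=\{0\}$ by separation. Classically this is the statement that a composition of isometries is an isometry, and quantumly it is exactly the nontrivial content that the notion of \emph{full}-isometry (as opposed to 1-isometry) and the paper's machinery of admissible multi-morphisms $\pi_1\ast\cdots\ast\pi_n$ are designed to capture: the paper defines $I$ as $\bigcap_{\pi\in\Pi_A}\ker\pi$ over all such products, so that closure under $\ast$ gives the coideal property for free, and then runs the inductive chain "$\alpha_{\pi_m}\ast\beta$ is 1-isometric, hence $\alpha_{\pi_{m-1}}\ast\alpha_{\pi_m}\ast\beta$ is 1-isometric, \dots" to control the states factoring through these products. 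Your ideal agrees with the paper's only \emph{a posteriori}, once the theorem is proved; as written, your proposal never establishes that $\bigcap_{J\in\mathcal{F}}J$ is a coideal, and the step it substitutes for this is based on a false general principle. The rest of your argument (nonemptiness of $\mathcal{F}$ via $\ker\epsilon$, faithfulness, minimality, and the density computation using only states of the form $(\chi\otimes\psi)(\pi_J\otimes\iota)$) is sound once that gap is filled.
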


We need a couple of lemmas before we can start the proof of this theorem.

\begin{lemma} \label{theorem separating set with extra condition is only positive on positive elements}
Let $S$ be a set of states on a unital C*-algebra $A$ such that
\renewcommand{\labelenumi}{(\roman{enumi})}
\begin{enumerate}
\item $S$ separates the points of $A$ (i.e. $\forall a \in A \backslash \{0\}, \exists \omega \in S: \omega(a) \neq 0$)
\item For all $a \in A$ and all $\omega \in S$ with $\omega(a^*a) \neq 0$, the state $\omega_a$ belongs to $S$, where $\omega_a$ is defined by
$$\omega_a: A \to \mathbb{C}: x \mapsto \frac{\omega(a^* x a)}{\omega(a^*a)}.$$
\end{enumerate}
\renewcommand{\labelenumi}{(\arabic{enumi})}
Then for every element $a$ of $A$, $a \geq 0$ is equivalent to $\omega(a) \geq 0$ for all $\omega$ in $S$.
\end{lemma}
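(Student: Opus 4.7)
\medskip

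The forward direction is immediate: every state, whether or not it lies in $S$, is nonnegative on positive elements. So the work is in the converse: assuming $\omega(a)\geq 0$ for all $\omega\in S$, I want to conclude $a\geq 0$.

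The plan is first to upgrade the hypothesis to self-adjointness of $a$. Since $\omega(a)\geq 0$ in particular means $\omega(a)\in\mathbb{R}$, we have $\omega(a-a^*)=\omega(a)-\overline{\omega(a)}=0$ for every $\omega\in S$. Property (i) then forces $a=a^*$. Having reduced to the self-adjoint case, I would argue by contradiction: suppose the spectrum of $a$ contains some point in $(-\infty,0)$. Pick $\epsilon>0$ and a continuous function $f\colon\sigma(a)\to[0,1]$ supported in $(-\infty,-\epsilon)$ and not identically zero, and set $c:=f(a)$. Then $c\geq 0$, $c\neq 0$, and by continuous functional calculus $ac=ca\leq -\epsilon c$, because $-(a+\epsilon)c=-(a+\epsilon)f(a)\geq 0$ as a function of $a$.

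Next I would use (i) to produce $\omega\in S$ with $\omega(c)\neq 0$; since $c\geq 0$, in fact $\omega(c)>0$. Put $b:=c^{1/2}$, so that $b^*b=c$ and $\omega(b^*b)\neq 0$. By property (ii) the state $\omega_b$ defined by $\omega_b(x)=\omega(b^*xb)/\omega(b^*b)$ belongs to $S$. Since $b$ commutes with $a$, I compute $b^*ab=c^{1/2}ac^{1/2}=ac$, so
\[
\omega_b(a)=\frac{\omega(ac)}{\omega(c)}\leq\frac{-\epsilon\,\omega(c)}{\omega(c)}=-\epsilon<0,
\]
using that $ac\leq -\epsilon c$ and that $\omega$ is positive. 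This contradicts the hypothesis that every element of $S$ is nonnegative on $a$, so the spectrum of $a$ is contained in $[0,\infty)$ and $a\geq 0$.

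The only delicate point is ensuring that condition (ii) can actually be applied, which is why I pass to $b=c^{1/2}$ rather than $b=c$: this way $\omega(b^*b)=\omega(c)$ is manifestly positive by separation, and the commutation $b^*ab=ac$ lets the spectral estimate $ac\leq -\epsilon c$ be transported directly across $\omega$. Beyond that, the argument is pure functional calculus; no further structure of $A$ or of $S$ is needed.
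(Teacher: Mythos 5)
Your proof is correct. Both you and the paper follow the same two-stage strategy: first deduce self-adjointness of $a$ from the separation property (identical arguments), and then derive a contradiction by using condition (ii) to conjugate a suitably chosen state by an element ``living on'' the negative part of $a$. The difference lies in how that element is produced. The paper decomposes $a = a^+ - a^-$ with $a^+a^-=0$, picks $\omega\in S$ with $\omega((a^-)^3)\neq 0$, and needs a Cauchy--Schwarz step to conclude $\omega((a^-)^2)\neq 0$ before it can form $\omega_{a^-}$ and compute $\omega_{a^-}(a) = -\omega((a^-)^3)/\omega((a^-)^2) < 0$. You instead take a continuous bump function $f$ supported in $(-\infty,-\epsilon)$, set $c=f(a)$ and $b=c^{1/2}$; separation applied directly to $c\geq 0$, $c\neq 0$ gives $\omega(b^*b)=\omega(c)>0$ with no Cauchy--Schwarz needed, and the spectral estimate $ac\leq -\epsilon c$ yields the sharper bound $\omega_b(a)\leq -\epsilon$. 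Your route trades the elementary positive/negative-part decomposition for functional calculus, and in exchange gets a cleaner verification that condition (ii) is applicable plus a quantitative conclusion; the paper's version avoids introducing $\epsilon$ and the square root but pays with the Cauchy--Schwarz detour. The only point you leave implicit is that $\epsilon$ must be chosen small enough that $\sigma(a)\cap(-\infty,-\epsilon)\neq\emptyset$, so that $f$ can be nonzero somewhere on the spectrum; this is immediate once a negative spectral point is fixed.
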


\begin{proof}
We fix an element $a \in A$. First of all, it is clear that, if $a$ is positive, $\omega(a)$ will also be positive for all $\omega$ in $S$, since every state is a positive mapping. Next, we suppose that $\omega(a) \geq 0$ for all $\omega$ in $S$, and we want to prove that $a$ is positive. First we check that $a$ is self-adjoint. Suppose not, then $a-a^*$ is nonzero. Since the states in $S$ separate the points of $A$, there exists an $\omega \in S$ such that $\omega(a-a^*) \neq 0$. But then $\omega(a) \neq \omega(a^*) = \overline{\omega(a)}$ which contradicts the fact that $\omega(a)$ is a positive (and hence real) number.

So we may suppose that $a$ is self-adjoint. Then we can write $a$ as $a^+-a^-$ for some positive elements $a^+$ and $a^-$ in $A$ with $a^+a^- = 0$. To prove that $a$ is positive, we have to show that $a^-$ is zero. But if $a^-$ were non-zero, then also $(a^-)^3$ would be non-zero. Hence we could find a state $\omega$ in $S$ such that $\omega((a^-)^3) \neq 0$. But then also $\omega((a^-)^2) \neq 0$, since by the Cauchy-Schwarz inequality we have $0 < |\omega((a^-)^3)|^2 \leq \omega((a^-)^2) \omega((a^-)^4)$. By the second property of the set $S$, we now know that $\omega_{a^-}$ belongs to $S$, which implies that $\omega_{a^-}(a) \geq 0$. But
$$\omega_{a^-}(a) = \frac{\omega(a^-aa^-)}{\omega((a^-)^2)} = \frac{\omega(a^-a^+a^- - (a^-)^3)}{\omega((a^-)^2)} = -\frac{\omega((a^-)^3)}{\omega((a^-)^2)}$$
and this would be strictly negative, since both $\omega((a^-)^3)$ and $\omega((a^-)^2)$ are supposed to be positive and non-zero. We get an obvious contradiction and conclude that $a=a^+$ is positive.
\end{proof}

\begin{lemma} \label{theorem set of states only positive on positive elements is convex-weakly *-dense}
Let $S$ be a set of states on a unital C*-algebra $A$ such that for every element $a \in A$ we have $a \geq 0$ iff $\omega(a) \geq 0$ for all $\omega \in S$. Then the convex hull of $S$ is weakly *-dense in the state space $\mathcal{S}(A)$.
\end{lemma}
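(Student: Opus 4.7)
The plan is to argue by contradiction using the Hahn-Banach separation theorem in the weak-* topology. Suppose the convex hull $\mathrm{co}(S)$ is not weakly-* dense in $\mathcal{S}(A)$, so there exists a state $\mu \in \mathcal{S}(A)$ that does not lie in the weak-* closure $K := \overline{\mathrm{co}(S)}^{w*}$. Since $K$ is a weakly-* closed convex subset of the locally convex space $(A^*, w^*)$, and $\{\mu\}$ is compact, I can separate them by a weakly-* continuous real-linear functional. The key point is that weakly-* continuous linear functionals on $A^*$ are precisely given by evaluation at elements of $A$.

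More concretely, I would obtain an element $a \in A$ and a real number $\alpha$ such that
\[\mathrm{Re}\,\omega(a) \geq \alpha \quad \text{for all } \omega \in K,\qquad \mathrm{Re}\,\mu(a) < \alpha.\]
Replacing $a$ by its self-adjoint part $(a+a^*)/2$ (which does not change the real part of $\omega(a)$ for any state $\omega$), I may assume $a = a^*$, so that $\omega(a)$ and $\mu(a)$ are real numbers. In particular $\omega(a) \geq \alpha$ holds for every $\omega \in S \subseteq K$, while $\mu(a) < \alpha$.

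Now I would use the defining property of $S$. Since every $\omega \in S$ is a state, $\omega(a - \alpha\cdot 1) = \omega(a) - \alpha \geq 0$ for all $\omega \in S$. By the hypothesis of the lemma, this forces $a - \alpha\cdot 1 \geq 0$ in $A$. Applying the state $\mu$ then gives $\mu(a) - \alpha = \mu(a - \alpha\cdot 1) \geq 0$, i.e.\ $\mu(a) \geq \alpha$, contradicting the strict inequality $\mu(a) < \alpha$ obtained from the separation step.

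The argument is essentially a clean application of Hahn-Banach, so there is no deep obstacle; the only point requiring a little care is ensuring that the separating functional may be taken self-adjoint so that the inequalities are between real numbers and the positivity criterion from the hypothesis can be applied to $a-\alpha\cdot 1$. Once that reduction is made, the conclusion follows immediately from the assumed characterization of positivity through $S$.
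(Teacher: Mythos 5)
Your proposal is correct and follows essentially the same route as the paper: separate the excluded state from the weak-* closed convex hull by Hahn--Banach, realize the separating functional as evaluation at some $a\in A$, pass to the self-adjoint part, and apply the positivity hypothesis to $a-\alpha\cdot 1$ (the paper uses $\lambda 1 - b$, the same element up to sign) to reach a contradiction. No issues.
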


\begin{proof}
Suppose the lemma does not hold. Then we can take a state $\omega$ in $\mathcal{S}(A) \backslash \overline{\mbox{conv}(S)}$. As $\overline{\mbox{conv}(S)}$ is weakly *-compact, there exists, by the Hahn-Banach separation theorem, a linear, weakly *-continuous mapping $\psi: A^* \to \mathbb{C}$ and a real number $\lambda$ such that
$$\mathfrak{Re}(\psi(\varphi)) \leq \lambda < \mathfrak{Re}(\psi(\omega))$$
for all states $\varphi$ in $\overline{\mbox{conv}(S)}$. But since $\psi$ is linear and weakly *-continuous, there must exist an element $a$ in $A$ such that $\psi(\varphi) = \varphi(a)$ for all $\varphi$ in $A^*$. Now the above equality reads
$$\mathfrak{Re}(\varphi(a)) \leq \lambda < \mathfrak{Re}(\omega(a))$$
for all $\varphi$ in $\overline{\mbox{conv}(S)}$. Since $\omega$ and all $\varphi \in \overline{\mbox{conv}(S)}$ are self-adjoint, we can use $b$ to denote $\mathfrak{Re}(a)$ and then we get
$$\varphi(b) \leq \lambda < \omega(b)$$
for all $\varphi$ in $\overline{\mbox{conv}(S)}$. So, for the element $\lambda 1 - b$ (where 1 is the unit element of $A$), we get $\varphi(\lambda 1-b) = \lambda - \varphi(b) \geq 0$ for all $\varphi \in \overline{\mbox{conv}(S)}$, and hence in particular for all $\varphi$ in $S$. By the assumption in the lemma, this would mean that $\lambda 1 - b$ is a positive element of $A$. But since $\omega$ is a state, this would imply that $\omega(\lambda 1 -b)$ is positive, which gives a contradiction with $\omega(b) > \lambda$.
\end{proof}

\begin{lemma} \label{theorem states composed with separating *-homomorphisms convex-weak *-dense}
Let $A$ be a unital C*-algebra and $\Pi$ be a set of *-homomorphisms $\pi: A \to C_\pi$ where the $C_\pi$ are unital C*-algebras. We suppose $\Pi$ separates the points of $A$, so if $a \in A$ is non-zero, then there exists an element $\pi \in \Pi$ such that $\pi(a) \neq 0$. Then the convex hull of the set
$$S = \{\omega \circ \pi \; | \; \pi \in \Pi, \omega \in \mathcal{S}(C_\pi)\}$$
is weakly *-dense in $\mathcal{S}(A)$.
\end{lemma}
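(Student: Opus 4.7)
The plan is to apply Lemma \ref{theorem separating set with extra condition is only positive on positive elements} and Lemma \ref{theorem set of states only positive on positive elements is convex-weakly *-dense} in sequence to the set $S$ given in the statement. The first lemma will upgrade the hypothesis that $\Pi$ separates points of $A$ into the statement that an element of $A$ is positive if and only if it is non-negative on all of $S$; the second will then deliver the desired weak-$*$ density of the convex hull of $S$ in $\mathcal{S}(A)$.

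First I would verify that $S$ itself separates the points of $A$. Given a nonzero $a \in A$, the hypothesis furnishes some $\pi \in \Pi$ with $\pi(a) \neq 0$. Since states separate points in any C*-algebra (a standard consequence of the Hahn--Banach theorem combined with the GNS construction applied to a nonzero element of $C_\pi$), we can pick $\omega \in \mathcal{S}(C_\pi)$ with $\omega(\pi(a)) \neq 0$. The composition $\omega \circ \pi$ then lies in $S$ and does not vanish on $a$.

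Next I would check the stability condition required by Lemma \ref{theorem separating set with extra condition is only positive on positive elements}. For $\omega \circ \pi \in S$ and $a \in A$ with $(\omega \circ \pi)(a^*a) \neq 0$, a direct computation, using only that $\pi$ is a $*$-homomorphism, gives
\[(\omega \circ \pi)_a(x) \;=\; \frac{\omega(\pi(a)^* \, \pi(x) \, \pi(a))}{\omega(\pi(a)^* \pi(a))} \;=\; \omega_{\pi(a)}(\pi(x))\]
for every $x \in A$, where $\omega_{\pi(a)}$ is well-defined on $C_\pi$ because $\omega(\pi(a)^*\pi(a)) = (\omega \circ \pi)(a^*a) \neq 0$. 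Hence $(\omega \circ \pi)_a = \omega_{\pi(a)} \circ \pi$ again belongs to $S$.

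Combining these two verifications, Lemma \ref{theorem separating set with extra condition is only positive on positive elements} yields that an element $a \in A$ is positive if and only if $\omega(a) \geq 0$ for every $\omega \in S$. An application of Lemma \ref{theorem set of states only positive on positive elements is convex-weakly *-dense} then concludes that the convex hull of $S$ is weakly-$*$ dense in $\mathcal{S}(A)$. The proof is essentially a bookkeeping assembly of the two preceding lemmas: there is no genuine obstacle, the only non-formal input being the standard fact that states separate points in a C*-algebra.
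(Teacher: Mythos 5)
Your proposal is correct and follows exactly the same route as the paper: verify that $S$ separates points, check the stability condition $(\omega\circ\pi)_a=\omega_{\pi(a)}\circ\pi$, and then chain Lemma \ref{theorem separating set with extra condition is only positive on positive elements} with Lemma \ref{theorem set of states only positive on positive elements is convex-weakly *-dense}. No differences worth noting.
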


\begin{proof}
We will use the above two lemma's to prove this result. We verify both conditions for Lemma \ref{theorem separating set with extra condition is only positive on positive elements}. Firstly, we choose a non-zero element $a \in A$. Since $\Pi$ separates the points of $A$, there is a $\pi \in \Pi: A \to C_\pi$ such that $\pi(a)$ is non-zero. But then it is clear that there also exists a state $\omega$ on $C_\pi$ such that $\omega(\pi(a)) \neq 0$. Hence we have found a state $\omega \circ \pi$ in $S$ that is non-zero on $a$, which shows that $S$ separates the points of $A$.

Secondly, we take an element $a \in A$ and an arbitrary state $\varphi$ in $S$. Then there exists a $\pi \in \Pi: A \to C_\pi$ and a state $\omega$ on $C_\pi$ such that $\varphi = \omega \circ \pi$. Since $\pi$ is a *-homomorphism, one can compute that $\varphi_a = \omega_{\pi(a)} \circ \pi$ if $\varphi(a^*a) \neq 0$. This clearly implies that $\varphi_a$ belongs to $S$, since $\omega_{\pi(a)}$ is still a state on $C_\pi$.

So, from Lemma \ref{theorem separating set with extra condition is only positive on positive elements}, we may conclude that an element $a \in A$ is positive if and only if $\varphi(a) \geq 0$ for all $\varphi \in S$. But then Lemma \ref{theorem set of states only positive on positive elements is convex-weakly *-dense} tells us that the convex hull of $S$ is weakly *-dense in the state space $\mathcal{S}(A)$.
\end{proof}

Now we have all the results needed to prove the main Theorem \ref{theorem largest quantum subgroup acting isometrically} of this section. To be able to construct the desired Woronowicz Hopf C*-ideal, we introduce some notations.

\begin{definition} \label{definition multi-morphism of QTGs}
Let $B$ be a unital C*-algebra and take QTGs $(A, \Delta, \alpha)$ and $(C_i, \Delta_i, \alpha_i)$ for $i=1, \cdots, n$. We say that $\pi: A \to C_1 \otimes \cdots \otimes C_n$ is a \textbf{multi-morphism} of QTGs if $\pi$ is of the form $\pi_1 \ast \cdots \ast \pi_n$ for some morphisms of QTGs $\pi_i: A \to C_i$ ($i=1, \cdots, n$). We used the notation $\pi_1 \ast \pi_2$ to denote the *-homomorphism $(\pi_1 \otimes \pi_2) \Delta: A \to C_1 \otimes C_2$, and inductively one can define $\pi_1 \ast \cdots \ast \pi_n$.
\end{definition}

\begin{definition} \label{definition admissible multi-morphism of QTGs}
Let $(B,L)$ be a CQMS and $(A, \Delta, \alpha)$ a QTG of $B$. Let $\pi_i: (A, \Delta, \alpha) \to (C_{\pi_i}, \Delta_{\pi_i}, \alpha_{\pi_i})$ be  morphisms of QTGs for $i=1, \cdots, n$. We say that the multi-morphism of QTGs $\pi = \pi_1 \ast \cdots \ast \pi_n$ is \textbf{admissible} if every coaction $\alpha_{\pi_i}$ is faithful and full-isometric ($i=1, \cdots, n)$.

We denote the set of all admissible multi-morphisms of QTGs on $(A, \Delta, \alpha)$ by $\Pi_A$.
\end{definition}

\begin{proof}[Proof of Theorem \ref{theorem largest quantum subgroup acting isometrically}: ]
We claim that the desired Woronowicz Hopf C*-ideal will be
$$I = \bigcap_{\pi \in \Pi_A} \ker(\pi).$$

For the first part of the theorem, we have to prove that the set $I$ is a Woronowicz Hopf C*-ideal. It is easy to see that $I$ is a closed two-sided *-ideal, since every kernel of a *-homomorphism is a closed *-ideal. We want to show that $(p_I \otimes p_I) \Delta(I) = \{0\}$. To do so, it is sufficient to prove that $(\pi_1 \otimes \pi_2) \Delta(I) = \{0\}$ for every $\pi_1, \pi_2 \in \Pi_A$. So we choose $\pi_1 = \pi_1^{(1)} \ast \cdots \ast \pi_n^{(1)}: A \to C_1^{(1)} \otimes \cdots \otimes C_n^{(1)}$ and $\pi_2 = \pi_1^{(2)} \ast \cdots \ast \pi_m^{(2)}: A \to C_1^{(2)} \otimes \cdots \otimes C_m^{(2)}$ in $\Pi_A$. This means that $(C_i^{(j)}, \Delta_i^{(j)}, \alpha_i^{(j)})$ are faithful QTGs such that the coactions $\alpha_i^{(j)}$ are full-isometric and $\pi_i^{(j)}$ are morphisms of QTGs.

But then obviously
$$\pi_1 \ast \pi_2 = \pi_1^{(1)} \ast \cdots \ast \pi_n^{(1)} \ast \pi_1^{(2)} \ast \cdots \ast \pi_m^{(2)}$$
is still a multi-morphism of QTGs, and since all $\alpha_i^{(j)}$ are faithful and full-isometric, we know that $\pi_1 \ast \pi_2$ is admissible. Hence $(\pi_1 \otimes \pi_2) \Delta(I) = (\pi_1 \ast \pi_2)(I) = \{0\}$, so $I$ is a Worononwicz Hopf C*-ideal.

Notice that the counit $\epsilon: A \to \mathbb{C}$ is an admissible morphism of QTGs. Therefore $I$ is a proper Woronowicz Hopf C*-ideal.

Hence it makes sense to consider the CQG $A/I$ with comultiplication
$$\Delta_I: A/I \to A/I \otimes A/I: a + I \mapsto (p_I \otimes p_I) \Delta(a)$$ and the coaction $\alpha_I$ of $(A/I, \Delta_I)$ on $B$, as defined in the theorem.

Since $\alpha$ is faithful, it is clear that also the induced coaction $\alpha_I = (\iota \otimes p_I) \alpha$ is faithful. To prove that $\alpha_I$ is full-isometric, we first choose faithful QTGs $(C_i, \Delta_i, \beta_i)$ for $i=1, \cdots, n$ such that $\beta = \beta_1 \ast \cdots \ast \beta_n$ is 1-isometric. Denote $C_1 \otimes \cdots \otimes C_n$ by $C$. We want to prove that $\alpha_I \ast \beta: B \to B \otimes A/I \otimes C$ is 1-isometric, using Lemma \ref{theorem inequality 1-isometric sufficient for weak*-dense convex set of states}.

We choose a multi-morphism $\pi \in \Pi_A$. Then we can write $\pi = \pi_1 \ast \cdots \ast \pi_m$ for some morphisms $\pi_i: (A, \Delta, \alpha) \to (C_{\pi_i}, \Delta_{\pi_i}, \alpha_{\pi_i})$ of QTGs. We use $C_\pi$ to denote $C_{\pi_1} \otimes \cdots \otimes C_{\pi_m}$. We introduce some more notations: for $i \in \{1, \cdots, m\}$ we write $\bar{\pi_i}$ for the unique $\ast$-homomorphism from $A/I$ to $C_{\pi_i}$ such that $\bar{\pi_i} \circ p_I = \pi_i$. Remark that this is well-defined since $\pi_i(I) = \{0\}$. We denote $\bar{\pi_1} \ast \cdots \ast \bar{\pi_m}$ by $\bar{\pi}$, where the $\ast$-product is defined using the comultiplication $\Delta_I$. We will also write $\alpha^{(j)}$ for $\alpha \ast \cdots \ast \alpha$, the $\ast$-product of $j$ factors $\alpha$ (hence $\alpha^{(j)}$ is a mapping from $B$ to $B \otimes A^{\otimes j}$). The symbol $\Delta^{(j)}$ denotes $(\Delta \otimes \iota \otimes \cdots \otimes \iota) \cdots (\Delta \otimes \iota) \Delta$, where we have $j$ factors in the composition (hence $\Delta^{(j)}$ is a mapping from $A$ to $A^{\otimes (j+1)})$.

Notice that for any state $\omega$ on $C_\pi \otimes C$, the mapping $\omega (\bar{\pi} \otimes \iota)$ is a state on $A/I \otimes C$. Moreover, for any $b \in B$, we have
\begin{eqnarray*}
\lefteqn{L((\iota \otimes \omega (\bar{\pi} \otimes \iota)) (\alpha_I \ast \beta)(b))} \\
& = & L((\iota \otimes \omega) (\iota \otimes \bar{\pi} \otimes \iota) (\iota \otimes p_I \otimes \iota) (\alpha \otimes \iota) \beta(b)) \\
& = & L((\iota \otimes \omega) (\iota \otimes \pi \otimes \iota) (\alpha \otimes \iota) \beta (b)) \\
& = & L((\iota \otimes \omega) (\iota \otimes \pi_1 \otimes \cdots \otimes \pi_m \otimes \iota) (\iota \otimes \Delta^{(m-1)} \otimes \iota) (\alpha \otimes \iota) \beta(b)) \\
& = & L((\iota \otimes \omega) (\iota \otimes \pi_1 \otimes \cdots \otimes \pi_m \otimes \iota) (\alpha^{(m)} \otimes \iota) \beta(b)) \\
& = & L((\iota \otimes \omega) (\alpha_{\pi_1} \ast \cdots \ast \alpha_{\pi_m} \otimes \iota) \beta(b)) \\
& = & L((\iota \otimes \omega) (\alpha_{\pi_1} \ast \cdots \ast \alpha_{\pi_m} \ast \beta)(b))
\end{eqnarray*}

Now, since $\pi$ is admissible, we know that every coaction $\alpha_{\pi_i}$ is faithful and full-isometric. In particular $\alpha_{\pi_m}$ is full-isometric, so, because of the choice of $\beta$, we now know that $\alpha_{\pi_m} \ast \beta$ is 1-isometric. Since $\alpha_{\pi_m}$ is faithful and $\alpha_{\pi_{m-1}}$ is full-isometric, this implies that $\alpha_{\pi_{m-1}} \ast \alpha_{\pi_m} \ast \beta$ is 1-isometric. We can continue this argument to conclude that $\alpha_{\pi_1} \ast \cdots \ast \alpha_{\pi_m} \ast \beta$ is 1-isometric. Together with the above calculation, it follows that $L((\iota \otimes \omega (\bar{\pi} \otimes \iota)) (\alpha_I \ast \beta)(b)) \leq L(b)$ for any element $b \in B$, any $\pi: A \to C_\pi$ in $\Pi_A$ and any state $\omega$ on $C_\pi \otimes C$.

By Lemma \ref{theorem inequality 1-isometric sufficient for weak*-dense convex set of states}, we need to show that the convex hull of the set
$$S = \{\omega (\bar{\pi} \otimes \iota) \; | \; \pi \in \Pi_A: A \to C_\pi, \; \omega \in \mathcal{S}(C_\pi \otimes C)\}$$
is weakly *-dense in the set of states on $A/I \otimes C$, to conclude that $\alpha_I \ast \beta$ is 1-isometric. This will follow from applying Lemma \ref{theorem states composed with separating *-homomorphisms convex-weak *-dense} to the C*-algebra $A/I \otimes C$ and the set $\Pi$ of *-homomorphisms $\bar{\pi} \otimes \iota: A/I \otimes C \to C_\pi \otimes C$ with $\pi \in \Pi_A$. It is sufficient to prove that this set of *-homomorphisms separates the points of $A/I \otimes C$. We choose an element $x$ in $A/I \otimes C$ and suppose $x$ is nonzero. Then, since the product states separate the points of the minimal tensor product, there exists a state $\psi$ on $C$ such that $(\iota \otimes \psi)(x)$ is non-zero. We can take an element $y$ in $A$ such that $p_I(y) = (\iota \otimes \psi)(x)$. Since this element is non-zero in $A/I$, we know that $y \not\in I$. Hence, there exists a morphism $\pi$ in $\Pi_A$ such that $\pi(y)$ is non-zero. But then $\bar{\pi}(\iota \otimes \psi)(x) = \bar{\pi}(p_I(y)) = \pi(y)$ is non-zero in $C_\pi$. So we may conclude that $(\bar{\pi} \otimes \iota)(x)$ is non-zero, which proves that $\Pi$ separates the points of $A/I \otimes C$. This finishes the proof of the fact that $\alpha_I$ is full-isometric.

For the third part of the theorem, let $J$ be a second Woronowicz Hopf C*-ideal in $A$, such that the coaction
$$\alpha_J: B \to B \otimes A/J: b \mapsto (\iota \otimes p_J) \alpha(b)$$
is faithful and full-isometric. We denoted the canonical projection of $A$ onto $A/J$ by $p_J$.

We want to prove that $I \subseteq J$. Since $J$ is a Woronowicz Hopf C*-ideal, we know that $A/J$ is a compact quantum group with comultiplication
$$\Delta_J: A/J \to A/J \otimes A/J: p_J(a) \mapsto (p_J \otimes p_J)\Delta(a).$$
Hence, by definition, the mapping $p_J$ is a morphism of QTGs from $(A, \Delta, \alpha)$ to $(A/J, \Delta_J, \alpha_J)$. Moreover, since $\alpha_J$ is a faithful and full-isometric coaction, we know that $p_J$ is an admissible morphism of QTGs, and hence $p_J$ belongs to $\Pi_A$. But then, by construction, $I$ is a subset of the kernel of $p_J$, which is exactly $J$.
\end{proof}

\section{Existence of isometry groups?}

The ultimate goal of this theory would be to find something like a \textit{quantum isometry group} of a compact quantum metric space $(B,L)$. This quantum isometry group would have to be a universal object in the category of all faithful quantum transformation groups of $B$ that preserve the extra structure $L$. We already know in what way the structure should be preserved:

\begin{definition} \label{definition full-isometric QTG}
Let $(B,L)$ be a compact quantum metric space. We call a quantum transformation group $(A, \Delta, \alpha)$ of $B$ \textbf{full-isometric} if $\alpha$ is a full-isometric coaction.
\end{definition}

For a CQMS $(B,L)$, we will be considering the category $\mathcal{C}(B,L)$ of all faithful and full-isometric transformation groups $(A, \Delta, \alpha)$ of $B$. The morphisms are morphisms of transformation groups, as defined in Definition \ref{definition morphism of QTGs}

It is also possible that $B$ has some additional structure, on top of the Lipnorm. For example, we can consider a functional $\psi$ on $B$ that should be preserved by the coaction. To this purpose, we introduce a second category. For a CQMS $(B,L)$ and a functional $\psi$ on $B$, we consider the category $\mathcal{C}(B,L,\psi)$ of all faithful and full-isometric transformation groups of the pair $(B, \psi)$ as defined in Definition \ref{definition morphism of QTGs}. The morphisms are still the morphisms of transformation groups.

\begin{definition} \label{definition quantum isometry group}
The \textbf{quantum isometry group of a CQMS $(B,L)$} is the universal object in the category $\mathcal{C}(B,L)$, if such an object exists. Hence it is an object $(A,\Delta,\alpha)$ in $\mathcal{C}(B,L)$ such that, for every object $(\tilde{A}, \tilde{\Delta}, \tilde{\alpha})$ in $\mathcal{C}(B,L)$, there is a unique morphism of transformation groups from $(A, \Delta, \alpha)$ to $(\tilde{A}, \tilde{\Delta}, \tilde{\alpha})$.

If $\psi$ is a functional on $B$, we define the \textbf{quantum isometry group of $(B,L,\psi)$} to be the universal object in $\mathcal{C}(B,L,\psi)$ if such an object exists.
\end{definition}

Obviously it is not immediately clear under what conditions such a quantum isometry group exists, but Theorem \ref{theorem largest quantum subgroup acting isometrically} is certainly a step in the right direction. This theorem allows us to define the quantum isometry group if we have a 'quantum permutation group' at our disposition. For example, for finite compact quantum metric spaces, we can define a quantum isometry group fixing the trace (or another functional), since S. Wang defined the quantum automorphism group \cite{Wang-Quantum_symmetry_groups_of_finite_spaces},\cite{Wang-Ergodic_Actions_of_universal_quantum_groups_on_operator_algebras}.

\begin{theorem}
Let $(B,L)$ be a finite compact quantum metric space and let $\psi$ be a functional on $B$. Then there exists a quantum isometry group of $(B,L,\psi)$.
\end{theorem}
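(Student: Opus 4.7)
The plan is to bootstrap the existence of a quantum isometry group on $(B,L,\psi)$ from Wang's quantum automorphism group of the pair $(B,\psi)$, and then to cut it down using Theorem \ref{theorem largest quantum subgroup acting isometrically}. Since $B$ is finite-dimensional, Wang's results \cite{Wang-Quantum_symmetry_groups_of_finite_spaces, Wang-Ergodic_Actions_of_universal_quantum_groups_on_operator_algebras} furnish a faithful QTG $(A_u,\Delta_u,\alpha_u)$ of the pair $(B,\psi)$ which is universal in the category of faithful QTGs of $(B,\psi)$, i.e.\ for every faithful QTG $(A',\Delta',\alpha')$ of $(B,\psi)$ there is a unique morphism of QTGs $\varphi\colon A_u\to A'$. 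Taking $A_u$ in its universal (full) form, it also carries a bounded counit, which is required to apply Theorem \ref{theorem largest quantum subgroup acting isometrically}.

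Applying that theorem to $\alpha_u$ produces a smallest Woronowicz Hopf C*-ideal $I\subseteq A_u$ such that the induced coaction
\[
(\alpha_u)_I \colon B \to B \otimes A_u/I, \qquad b\mapsto (\iota\otimes p_I)\alpha_u(b),
\]
is faithful and full-isometric. To see that $(\alpha_u)_I$ still preserves $\psi$, just apply $\psi\otimes\iota$:
\[
(\psi\otimes\iota)(\alpha_u)_I(b) = p_I\bigl((\psi\otimes\iota)\alpha_u(b)\bigr) = p_I(\psi(b)1_{A_u}) = \psi(b)\,1_{A_u/I}.
\]
Hence $(A_u/I,\Delta_I,(\alpha_u)_I)$ is an object of $\mathcal{C}(B,L,\psi)$.

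It remains to verify the universal property in $\mathcal{C}(B,L,\psi)$. Given any $(A',\Delta',\alpha')\in\mathcal{C}(B,L,\psi)$, universality of $A_u$ yields a unique morphism of QTGs $\varphi\colon A_u\to A'$. Set $J=\ker\varphi$: as the kernel of a morphism of CQGs, $J$ is a Woronowicz Hopf C*-ideal, and $\varphi$ factors as $\bar{\varphi}\circ p_J$ for an \emph{injective} morphism $\bar{\varphi}\colon A_u/J\to A'$ satisfying $(\iota\otimes \bar{\varphi})(\alpha_u)_J=\alpha'$. Via this embedding, the coaction $(\alpha_u)_J$ is identified with $\alpha'$, hence is itself faithful and full-isometric. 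The minimality clause of Theorem \ref{theorem largest quantum subgroup acting isometrically} forces $I\subseteq J$, so $\varphi$ descends to a unique morphism of QTGs $\tilde{\varphi}\colon A_u/I\to A'$, as required.

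The main delicate point is not the argument itself but the choice of ambient object: one must ensure that Wang's universal quantum automorphism group of $(B,\psi)$ exists, is universal in the right category, and is taken in the full C*-algebraic form so that the boundedness of the counit --- the standing hypothesis of Theorem \ref{theorem largest quantum subgroup acting isometrically} --- is available. Once this is arranged, the construction of $I$ and the verification of universality are essentially formal consequences of Theorem \ref{theorem largest quantum subgroup acting isometrically} combined with Wang's universality.
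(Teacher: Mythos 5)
Your proof is correct and follows essentially the same route as the paper: start from Wang's universal quantum automorphism group of $(B,\psi)$ (which is of Kac type, hence has a bounded counit), quotient by the ideal $I$ of Theorem \ref{theorem largest quantum subgroup acting isometrically}, and descend the unique Wang morphism $\varphi$ to $A_u/I$. The only (harmless) variation is in how you get $\varphi(I)=\{0\}$: the paper notes directly that $\varphi$ is an admissible morphism of QTGs, so $I\subseteq\ker\varphi$ by the explicit construction of $I$ as an intersection of kernels, whereas you invoke the minimality clause with $J=\ker\varphi$, which additionally requires checking that $\ker\varphi$ is a Woronowicz Hopf C*-ideal and that $(\alpha_u)_J$ is faithful and full-isometric --- both true, and in fact faithfulness of $\alpha'$ forces $\varphi$ to be surjective, so $\bar{\varphi}$ is an isomorphism of QTGs and the transfer is immediate.
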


\begin{proof}
Denote by $(A, \Delta, \alpha)$ the quantum automorphism group of $(B,\psi)$, according to Wang's
definition \cite{Wang-Quantum_symmetry_groups_of_finite_spaces}. This means that $(A, \Delta, \alpha)$ is a universal object in the category $\mathcal{C}(B,\psi)$ of all faithful transformation groups of the pair $(B,\psi)$. Note that $(A, \Delta)$ is of Kac type, and hence has a bounded counit.

Then we claim that $(A/I, \Delta_I, \alpha_I)$ is the desired quantum isometry group of $(B, \psi)$, where $I$ is the Woronowicz Hopf C*-ideal as in (the proof of) Theorem \ref{theorem largest quantum subgroup acting isometrically}. That is, $I$ is the intersection of all kernels of admissible multi-morphisms of QTGs on $(A, \Delta, \alpha)$. As before, we used $\Delta_I$ to denote the induced comultiplication of $A/I$, and $\alpha_I$ to denote the induced coaction $(\iota \otimes p_I) \alpha$.

From Theorem \ref{theorem largest quantum subgroup acting isometrically} we already know that $(A/I, \Delta_I, \alpha_I)$ is a faithful and full-isometric quantum transformation group. To prove the universality, let $(\tilde{A}, \tilde{\Delta}, \tilde{\alpha})$ be a second faithful and full-isometric quantum transformation group of $(B,\psi)$. We want to show that there exists a unique morphism of quantum transformation groups from $(A/I, \Delta_I, \alpha_I)$ to
$(\tilde{A}, \tilde{\Delta}, \tilde{\alpha})$.

Because of the universality of the object $(A, \Delta, \alpha)$ in the category $\mathcal{C}(B,\psi)$, we know that there is a unique morphism of QTGs $\theta: A \rightarrow \tilde{A}$ from $(A,\Delta,\alpha)$ to $(\tilde{A},\tilde{\Delta},\tilde{\alpha})$.

Since $\theta$ is a morphism of QTGs, we know that $\alpha_\theta = (\iota \otimes \theta) \alpha$ equals $\tilde{\alpha}$, hence $\alpha_\theta$ is full-isometric. But then, by definition, $\theta$ is admissible. It is now clear that $\theta(I) = \{0\}$, by the construction of $I$.

We have verified that the mapping
\begin{equation} \bar{\theta}: A/I \rightarrow \tilde{A}: (a+I) \mapsto \theta(a).\end{equation}
is well defined. Remark that $\bar{\theta} p_I = \theta$. We claim that $\bar{\theta}$ is the desired morphism of quantum transformation groups from $(A/I, \Delta_I, \alpha_I)$ to $(\tilde{A},\tilde{\Delta},\tilde{\alpha})$.
\begin{itemize}
\item It is clear that $\bar{\theta}$ is a unital *-homomorphism since $\theta$ is a unital *-homomorphism.
\item We have $(\bar{\theta} \otimes \bar{\theta}) \Delta_I p_I =
(\bar{\theta} \otimes \bar{\theta}) (p_I \otimes p_I) \Delta = (\theta \otimes \theta) \Delta = \tilde{\Delta} \theta =
 \tilde{\Delta} \bar{\theta} p_I$ which means that the mappings $(\bar{\theta} \otimes \bar{\theta}) \Delta_I$
 and $\tilde{\Delta} \bar{\theta}$ are equal on $A/I$.
\item We also have $(\iota \otimes \bar{\theta}) \alpha_I = (\iota \otimes \bar{\theta}) (\iota \otimes p_I) \alpha =
(\iota \otimes \theta) \alpha = \tilde{\alpha}$.
\end{itemize}

To conclude the proof, we have to check the uniqueness of $\bar{\theta}$. But if we take any morphism of quantum transformation groups $\sigma$ from $(A/I, \Delta_I, \alpha_I)$ to $(\tilde{A}, \tilde{\Delta}, \tilde{\alpha})$, then $\sigma p_I$ would be a morphism of quantum transformation groups from $(A, \Delta, \alpha)$ to $(\tilde{A}, \tilde{\Delta}, \tilde{\alpha})$. Because of the uniqueness of $\theta$, this implies that $\sigma p_I = \theta$. But then of course $\sigma = \bar{\theta}$, which proves the uniqueness of $\bar{\theta}$.
\end{proof}

The previous theorem shows that every finite quantum metric space had an isometry group (fixing a state). We compute this isometry group in a 'small' example, where the isometry group is non-classical: the quantum space $M_2(\C) \oplus \C \oplus \C$. One can check that the quantum symmetry group of Wang (preserving the trace) reduces to the C*-algebra $A$ generated by four generators $x,y,z,p$ with relations
$$\left\{ \begin{array}{l} x^2 = -yz \\
2xx^*+yy^*+zz^* = 1 \\
p^* = p = p^2
\end{array} \right. $$
and such that the *-algebra generated by $x,y,z$ is commutative.

The comultiplication is defined by
$$ \begin{array}{rcl}
\Delta(x) & = & (zz^* - yy^*) \otimes x + x \otimes z + x^* \otimes y \\
\Delta(y) & = & (yx^* - xz^*) \otimes 2x + y \otimes z + z^* \otimes y \\
\Delta(z) & = & (xy^* - zx^*) \otimes 2x + z \otimes z + y^* \otimes y \\
\Delta(p) & = & p \otimes p + (1-p) \otimes (1-p).
\end{array} $$

We will write the elements of $M_2(\C) \oplus \C \oplus \C$ as triples. We use the notation $e_{ij} \; (i,j=1,2)$ for the matrix units in $M_2(\C)$.
Then the coaction of $A$ on $M_2(\C) \oplus \C \oplus \C$ is the unital *-homomorphism defined by
$$\begin{array}{rcl}
\alpha(e_{12},0,0) & = & (e_{11} - e_{22},0,0) \otimes x + (e_{12},0,0) \otimes z + (e_{21},0,0) \otimes y\\
\alpha(0,1,0) & = & (0,1,0) \otimes p + (0,0,1) \otimes (1-p).
\end{array}$$

There are several Lipnorms that make this space into a CQMS. The Lipnorm $L$ we will consider here will be the following:
$$L( \left( \begin{array}{cc} a & b \\ c & d \end{array} \right), e,f) = |a-d| + |b| + |c| + |a-e| + |a-f|$$
for all $a,b,c,d,e,f \in \C$.

For this Lipnorm, one can prove that a representation $\pi$ of $A$ is admissible if and only if $\pi(x) = \pi(y) = 0$. Hence, to find the quantum isometry group, we take the quotient of $A$ by the ideal generated by $x$ and $y$. We find that the quantum isometry group of $M_2(\C) \oplus \C \oplus \C$ is the universal C*-algebra generated by a unitary element $z$ and a projection $p$. The comultiplication is given by
$$ \begin{array}{rcl}
\Delta(z) & = & z \otimes z \\
\Delta(p) & = & p \otimes p + (1-p) \otimes (1-p)
\end{array} $$
and the coaction $\alpha$ is given by
$$ \begin{array}{rcl}
\alpha(e_{12},0,0) & = & (e_{12},0,0) \otimes z \\
\alpha(0,1,0) & = & (0,1,0) \otimes p + (0,0,1) \otimes (1-p).
\end{array} $$

\bibliographystyle{plain}
\bibliography{isometric-coactions}

\begin{thebibliography}{10}

\bibitem{Banica-Quantum_automorphism_groups_of_small_metric_spaces}
Teodor Banica.
\newblock Quantum automorphism groups of small metric spaces.
\newblock {\em Pacific J. Math.}, 219:27--51, 2005.

\bibitem{Banica&Goswami-Quantum_isometries_and_noncommutative_spheres}
Teodor Banica and Debashish Goswami.
\newblock Quantum isometries and noncommutative spheres.
\newblock Comm. Math. Phys., to appear, 2009.

\bibitem{Bhowmick&Goswami-Quantum_group_of_orientation-preserving_Riemannian_i%
sometries}
Jyotishman Bhowmick and Debashish Goswami.
\newblock Quantum group of orientation-preserving {R}iemannian isometries.
\newblock {\em J. Func. Anal.}, 257(8):2530--2572, October 2009.

\bibitem{Bhowmick&Goswami-Quantum_isometry_groups_examples_and_computations}
Jyotishman Bhowmick and Debashish Goswami.
\newblock Quantum isometry groups: Examples and computations.
\newblock {\em Commun. Math. Phys.}, 285(2):421 -- 444, January 2009.

\bibitem{Bhowmick&Goswami-Quantum_isometry_groups_of_the_Podles_Spheres}
Jyotishman Bhowmick and Debashish Goswami.
\newblock Quantum isometry groups of the podles spheres.
\newblock {\em J. Funct. Anal.}, 258(9):2937 -- 2960, May 2010.

\bibitem{Bhowmick&Skalski-Quantum_isometry_groups_of_noncommutative_manifolds_%
associated_to_group_C*-algebras}
Jyotishman Bhowmick and Adam Skalski.
\newblock Quantum isometry groups of noncommutative manifolds associated to
  group c*-algebras.
\newblock J. Geom. Phys., to appear, 2010.

\bibitem{Connes-Compact_metric_spaces_Fredholm_modules_and_hyperfiniteness}
Alain Connes.
\newblock 87.compact metric spaces, fredholm modules, and hyperfiniteness.
\newblock {\em Ergodic Theory Dynamical Systems}, 9(2):207--220, 1989.

\bibitem{Goswami-Quantum_group_of_isometries_in_classical_and_noncommutative_g%
eometry}
Debashish Goswami.
\newblock Quantum group of isometries in classical and noncommutative geometry.
\newblock {\em Commun. Math. Phys.}, 285(1):141--160, January 2009.

\bibitem{Goswami-Quantum_isometry_group_for_spectral_triples_with_real_structu%
re}
Debashish Goswami.
\newblock Quantum isometry group for spectral triples with real structure.
\newblock {\em SIGMA}, 6(007), 2010.

\bibitem{Li-CQMS_and_ergodic_actions_of_CQGs}
Hanfeng Li.
\newblock Compact quantum metric spaces and ergodic actions of compact quantum
  groups.
\newblock {\em J. Funct. Anal.}, 256(10):3368--3408, May 2009.

\bibitem{Maes&VanDaele-Notes_on_CQGs}
Ann Maes and Alfons~Van Daele.
\newblock Notes on compact quantum groups.
\newblock {\em Nieuw Arch. Wisk. (4)}, 16(1-2):73--112, 1998.

\bibitem{Rieffel-Metrics_on_state_spaces}
Marc~A. Rieffel.
\newblock Metrics on state spaces.
\newblock {\em Doc. Math.}, 4:559--600, 1999.

\bibitem{Rieffel-Gromov-Hausdorff_distance_for_quantum_metric_spaces}
Marc~A. Rieffel.
\newblock Gromov-hausdorff distance for quantum metric spaces.
\newblock {\em Mem. Amer. Math. Soc.}, 168(796):1--65, March 2004.

\bibitem{Rieffel-Compact_quantum_metric_spaces}
Marc~A. Rieffel.
\newblock Compact quantum metric spaces.
\newblock In R.~S. Doran and R.~V. Kadison, editors, {\em Operator algebras,
  quantization, and noncommutative geometry}, pages 315--330. Contemp. Math.,
  365, Amer. Math. Soc., Providence, RI, 2004.

\bibitem{Takesaki-Theory_of_operator_algebras_I}
Masamichi Takesaki.
\newblock {\em Theory of Operator Algebras {I}}, volume 124 of {\em
  Encyclopaedia of Mathematical Sciences, Operator Algebras and Non-Commutative
  Geometry}.
\newblock Springer, 2nd edition, 2002.

\bibitem{Wang-Quantum_symmetry_groups_of_finite_spaces}
Shuzhou Wang.
\newblock Quantum symmetry groups of finite spaces.
\newblock {\em Commun. Math. Phys.}, 195(1):195--211, July 1998.

\bibitem{Wang-Ergodic_Actions_of_universal_quantum_groups_on_operator_algebras}
Shuzhou Wang.
\newblock Ergodic actions of universal quantum groups on operator algebras.
\newblock {\em Commun. Math. Phys.}, 203(2):481--498, June 1999.

\bibitem{Woronowicz-CQGs_symetries_quantiques}
S.L. Woronowicz.
\newblock Compact quantum groups.
\newblock In {\em Sym\'etries Quantiques (Les Houches, 1995)}, pages 845--–884,
  Amsterdam, 1998. North-Holland.

\end{thebibliography}

\end{document}